\newcommand{\eqref}[1]{(\ref{#1})}
\newcommand{\Prb}{\mathbf{P}}
\newcommand{\Mean}{\mathbf{E}}
\newcommand{\Law}{\operatorname{Law}}
\newcommand{\trace}{\operatorname{tr}}
\newtheorem{thrm}{Theorem}[section]
\newtheorem{lemma}[thrm]{Lemma}
\begin{document}
\begin{frontmatter}

\title{Large deviation properties of weakly interacting processes via
weak convergence methods}
\runtitle{Large deviations for weakly interacting processes}

\begin{aug}
\author[A]{\fnms{Amarjit} \snm{Budhiraja}\thanksref{t2}\ead[label=e1]{budhiraj@email.unc.edu}},
\author[B]{\fnms{Paul} \snm{Dupuis}\thanksref{t3}\ead[label=e2]{dupuis@dam.brown.edu}} and
\author[B]{\fnms{Markus} \snm{Fischer}\corref{}\thanksref{t4}\ead[label=e3]{fischer@statlab.uni-heidelberg.de}}
\runauthor{A. Budhiraja, P. Dupuis and M. Fischer}
\affiliation{University of North Carolina at Chapel Hill, Brown University and~Brown~University}
\address[A]{A. Budhiraja\\
Department of Statistics\\
\quad and Operations Research\\
University of North Carolina\\
\quad at Chapel Hill\\
Chapel Hill, North Carolina 27599\\
USA\\
\printead{e1}}
\address[B]{P. Dupuis\\
M. Fischer\\
Lefschetz Center for Dynamical Systems\\
Division of Applied Mathematics\\
Brown University\\
Providence, Rhode Island 02912\\
USA\\
\printead{e2}\\
\phantom{E-mail: }\printead*{e3}}
\end{aug}

\thankstext{t2}{Supported in part by the Army Research Office
(W911NF-0-1-0080, W911NF-10-1-0158), NSF Grant DMS-10-04418 and
the US-Israel Binational Science Foundation 2008466.}
\thankstext{t3}{Supported in part by NSF Grant DMS-07-06003,
the Army Research Office W911NF-09-1-0155 and the Air Force Office of
Scientific Research FA9550-09-1-0378.}
\thankstext{t4}{Supported by the German Research
Foundation (DFG research fellowship), NSF Grant DMS-07-06003 and the
Air Force Office of Scientific Research (FA9550-07-1-0544, FA9550-09-1-0378).}

\received{\smonth{12} \syear{2009}}
\revised{\smonth{10} \syear{2010}}

%
\begin{abstract}
We study large deviation properties of systems of weakly interacting
particles modeled by It{\^{o}} stochastic differential equations
(SDEs). It is known under certain conditions that the
corresponding sequence of empirical measures converges, as the number
of particles tends to infinity, to the weak solution of an associated
McKean--Vlasov equation. We derive a large deviation principle via the
weak convergence approach. The proof, which avoids discretization
arguments, is based on a representation theorem, weak convergence and
ideas from stochastic optimal control. The method works under rather
mild assumptions and also for models described by SDEs not of
diffusion type. To illustrate this, we treat the case of SDEs
with delay.
\end{abstract}

%
\begin{keyword}[class=AMS]
\kwd[Primary ]{60F10}
\kwd{60K35}
\kwd[; secondary ]{60B10}
\kwd{60H10}
\kwd{34K50}
\kwd{93E20}.
\end{keyword}
\begin{keyword}
\kwd{Large deviations}
\kwd{interacting random processes}
\kwd{McKean--Vlasov equation}
\kwd{stochastic differential equation}
\kwd{delay}
\kwd{weak convergence}
\kwd{martingale problem}
\kwd{optimal stochastic control}.
\end{keyword}

\end{frontmatter}

\section{Introduction}

Collections of weakly interacting random processes have long been of
interest in statistical physics and more recently have appeared in
problems of engineering and operations research. A simple but important
example of such a collection is a group of ``particles,'' each of which evolves according to the
solution of an It\^{o}-type stochastic differential equation
(SDE). All particles have the same functional form for the drift and
diffusion coefficients. The coefficients of particle $i$ are, as usual,
allowed to depend on the current state of particle $i$, but also depend
on the current empirical distribution of all particle locations. When
the number of particles is large the contribution of any given particle
to the empirical distribution is small, and in this sense the
interaction between any two particles is considered ``weak.''

For various reasons, including model simplification and approximation,
one may consider a functional law of large numbers (LLN) limit
as the number of particles tends to infinity. The limit behavior of a
single particle (under assumptions which guarantee that all particles
are in some sense exchangeable) can be described by a two component
Markov process. One component corresponds to the state of a typical
particle, while the second corresponds to the limit of the empirical
measures. Again using that all particles are exchangeable, under
appropriate conditions one can show that the second component coincides
with the distribution of the particle component. The limit process,
which typically has an infinite-dimensional state, is sometimes
referred to as a ``nonlinear
diffusion.'' Because the particle's own distribution
appears in the state dynamics, the partial differential equations that
characterize expected values and densities associated with this process
are nonlinear, and hence the terminology.

In this paper we consider the large deviation properties of the
particle system as the number of particles tends to infinity. Thus the
deviations we study are those of the empirical measure of the prelimit
process from the distribution of the nonlinear diffusion. Of particular
interest, and a subject for further study, are deviations when the
initial distribution of the single particle in the nonlinear diffusion
is invariant under the joint particle/measure dynamics, and related
questions of stability for both the limit and prelimit processes.

One of the basic references for large deviation results for weakly
interacting diffusions is \cite{dawsongaertner87}. This paper considers
a system of uniformly nondegenerate diffusions with interaction in the
drift term and establishes a large deviation principle for the
empirical measure using discretization arguments and careful
exponential probability estimates (see Section \ref
{SectExtensionsComparison}). Properties related to a large deviation
principle such as fluctuation theorems have been studied in
\cite{tanaka82,benarousbrunaud90,meleard95,benarouszeitouni99,herrmannetalii08}.
A proof of the large deviation principle for systems with constant
diffusion coefficient that is based on a comparison result for a
related infinite-dimensional Hamilton--Jacobi--Bellman equation appears
in \cite{fengkurtz06}, Section 13.3.

Later works have developed the theory for a variety of alternative
models, including multilevel large deviations \cite{dawsongaertner94,djehicheschied98},
jump diffusions \cite{leonard95,leonard95b}, discrete-time systems
\cite{dawsondelmoral05,delmoralguionnet98} and interacting diffusions with random interaction
coefficients \cite{benarousguionnet95} or singular interaction
\cite{fontbona04}. In the current work we develop an approach which is very
different from the one taken in any of these papers. Our proofs do not
involve any time or space discretization of the system, and no
exponential probability estimates are invoked. The main ingredients in
the proof are weak convergence methods for functional occupation
measures and certain variational representation formulas. Our proofs
cover models with degenerate noise and allow for interaction in both
drift and diffusion terms. In fact, the techniques are applicable to a
wide range of model settings, and an example of stochastic delay
equations is considered in Section \ref{SectExtensions} to illustrate
the possibilities.

The starting point of our analysis is a variational representation for
moments of nonnegative functionals of a Brownian motion \cite
{bouedupuis98}. Using this representation, the proof of the large
deviation principle reduces to the study of asymptotic properties of
certain controlled versions of the original process. The key step in
the proof is to characterize the weak limits of the control and
controlled process as the large deviation parameter tends to its limit
and under the same scaling that applies to the original process. More
precisely, one needs to characterize the limit of the empirical measure
of a large collection of controlled and weakly interacting processes.
In the absence of control this characterization problem reduces to an
LLN analysis of the original particle system, which has been
studied extensively \cite{oelschlaeger84,funaki84,gaertner88}. Our
main tools for the study of the controlled analog are functional
occupation measure methods. Indeed, these methods have been found to be
quite useful for the study of averaging problems, but where the average
is with respect to a time variable \cite{kushner90}. In the problem
studied here the measure-valued processes of interest are obtained
using averaging over particles rather than the time variable.

The approach presented here can be applied to interacting systems
driven by general continuous time processes with jumps provided the
systems are scaled in the right way. Indeed, the driving noise process
could be a Brownian motion plus an independent Poisson random measure.
A key step to make the approach work is a variational representation of
Poisson functionals, which has recently been established in \cite{budhirajaetal09}.

Finally, we remark that variational representations for Brownian
motions and Poisson random measures
\cite{budhirajadupuis00,budhirajaetal08,budhirajaetal09} have proved to be useful for the
study of small-noise large-deviation problems, and many recent papers
have applied these results to a variety of infinite-dimensional
small-noise systems. A small selection is
\cite{DuanMillet09,renzhang05,roeckneretal09,sritharansundar06} (see
\cite{budhirajaetal09} for a more complete list). We expect the current work
to be similarly a starting point for the study, using variational
representations, of a rather different collection of large deviation
problems, namely asymptotics of a large number of interacting particles.

An outline of the paper is as follows. In Section \ref{SectModel} we
introduce the interacting SDE particle model, the related
controlled and LLN limit versions and discuss the relevant
topologies and sense of uniqueness of solutions. Section \ref
{SectLPLimit} discusses the relation between Laplace and
large-deviation principles, states assumptions and the main result of
the paper and then outlines how this result will be proved using a
representation theorem. In Section \ref{SectAuxiliary} we describe the
martingale problems that will be used in the proof. The proof itself is
divided into lower and upper bounds in Sections \ref{SectLPLowerBound}
and \ref{SectLPUpperBound}, respectively. The constructions in the
proof are set up to handle a more general case than just the model
introduced in Section \ref{SectModel}, and in Section \ref
{SectExtensions} we use this generality to state and prove a large
deviation theorem for systems with delay. This section also reviews the
prior work of \cite{dawsongaertner87}. The \hyperref[AppLocalMartingales]{Appendix} contains the proof
of a technical point that was deferred for reasons of exposition.


\section{The model}
\label{SectModel}

For each $N\in\mathbb{N}$, the $N$-particle prelimit model is
described in terms of a system of $N$ weakly coupled $d$-dimensional
stochastic differential equations (SDEs). The system is
considered over the fixed finite interval $[0,T]$. Set $\mathcal{X}
\doteq\mathbf{C}([0,T],\mathbb{R}^{d})$, and equip $\mathcal{X}$ with
the maximum norm, which is denoted by $\Vert\cdot\Vert$. Similarly, set
$\mathcal{W} \doteq\mathbf{C}([0,T],\mathbb{R}^{d_{1}})$ and equip
$\mathcal{W}$ with the maximum norm. Let $(\Omega,\mathcal{F},\Prb)$
be a probability space, and suppose that on this space there is a
filtration $(\mathcal{F}_{t})$ satisfying the usual conditions [i.e.,
$(\mathcal{F}_{t})$ is right-continuous and $\mathcal{F}_{0}$ contains
all $\Prb$-negligible sets], as well as a~collection $\{W^{i},i\in
\mathbb{N}\} $ of independent standard $d_{1}$-dimensional
$(\mathcal{F}_{t})$-Wiener processes.

Let $b$ and $\sigma$ be Borel measurable functions defined on $\mathbb
{R}^{d}\times\mathcal{P}(\mathbb{R}^{d})$ taking values in $\mathbb
{R}^{d}$ and the space of real $d \times d_{1}$-matrices,
respectively. If $(\mathcal{S},d_{\mathcal{S}})$ is a metric space,
then $\mathcal{P}(\mathcal{S})$ denotes the space of probability
measures on the Borel $\sigma$-field $\mathcal{B}(\mathcal{S})$. The
space $\mathcal{P}(\mathcal{S})$ is equipped with the topology of weak
convergence, which can be metricized, using, for example, the bounded
Lipschitz metric, making it a Polish space.

The evolution of the state of the particles in the $N$-particle model
is given by the solution to the system of \mbox{SDEs}
\begin{eqnarray} \label{EqPrelimitSDE}
dX^{i,N}(t)&=&b(X^{i,N}(t),\mu^{N}(t))\,dt+\sigma
(X^{i,N}(t),\mu^{N}(t))\,dW^{i}(t), \nonumber\\[-8pt]\\[-8pt]
X^{i,N}(0)&=&x^{i,N},\nonumber
\end{eqnarray}
where $x^{i,N} \in\mathbb{R}^{d}$, $i\in\{1,\ldots,N\}$, and
\[
\mu^{N}(t,\omega)\doteq\frac{1}{N}\sum_{i=1}^{N}\delta
_{X^{i,N}(t,\omega)},\qquad \omega\in\Omega,
\]
is the empirical measure of $(X^{1,N}(t),\ldots,X^{N,N}(t))$ for $t\in
[0,T]$. By construction, $\mu^{N}(t)$ is a $\mathcal{P}(\mathbb
{R}^{d})$-valued random variable. Denote by $\mu^{N}$ the empirical
measure of $(X^{1,N},\ldots,X^{N,N})$ over the time interval $[0,T]$,
that is, $\mu^{N}$ is the $\mathcal{P}(\mathcal{X})$-valued random
variable defined by
\[
\mu_{\omega}^{N}\doteq\frac{1}{N}\sum_{i=1}^{N}\delta_{X^{i,N}(\cdot,\omega
)},\qquad \omega\in\Omega.
\]
Clearly, the distribution of $\mu^{N}(t)$ is identical to the marginal
distribution of~$\mu^{N}$ at time $t$, that is, $\mu^{N}(t) = \mu
^{N}\circ\pi^{-1}_{t}$ where $\pi_{t} \dvtx \mathcal{X} \rightarrow\mathbb
{R}^{d}$ is the projection map corresponding to the value at time
$t$.

Our aim is to establish a Laplace principle for the family $\{\mu^{N},
N\in\mathbb{N}\}$ of $\mathcal{P}(\mathcal{X})$-valued random
variables. When $\frac{1}{N}\sum_{i=0}^{N}\delta_{x^{i,N}}$ converges
weakly to $\nu_{0}$ for some $\nu_{0} \in\mathcal{P}(\mathbb{R}^{d})$,
the asymptotic behavior of $\mu^{N}$ as $N$ tends to infinity can be
characterized in terms of solutions to the nonlinear diffusion
\begin{eqnarray} \label{EqLimitSDE}
dX(t)&=&b(X(t),\Law(X(t)))\,dt+\sigma(X(t),\Law(X(t))
)\,dW(t),\nonumber\\[-8pt]\\[-8pt]
X(0) &\sim&\nu_{0},\nonumber
\end{eqnarray}
where $W$ is a standard $d_{1}$-dimensional Wiener process. Thus we are
interested in the study of deviations of $\mu^{N}$, $N$ large, from its
typical behavior, namely the probability law of the process solving
\eqref{EqLimitSDE}.

In the formulation and proof of the Laplace principle, we will need to
consider a controlled version of \eqref{EqPrelimitSDE}. For
$N\in\mathbb{N}$, let $\mathcal{U}_{N}$ be the space of all $(\mathcal
{F}_{t})$-progressively measurable functions $u \dvtx[0,T]\times\Omega
\rightarrow\mathbb{R}^{N\times d_{1}}$ such that
\[
\Mean\biggl[ \int_{0}^{T}|u(t)|^{2}\,dt\biggr] <\infty,
\]
where $\Mean$ denotes expectation with respect to $\Prb$, and \mbox{$|\cdot|$}
denotes the Euclidean norm of appropriate dimension. For $u\in\mathcal
{U}_{N}$, we sometimes write $u=(u_{1},\ldots,u_{N})$, where $u_{i}$
is the $i$th block of $d_{1}$ components of $u$.

Given $u\in\mathcal{U}_{N}$, $u=(u_{1},\ldots,u_{N})$, we consider
the controlled system of \mbox{SDEs}
\begin{eqnarray} \label{EqPrelimitControlSDE}
d\bar{X}^{i,N}(t)&=&b(\bar{X}^{i,N}(t),\bar{\mu}^{N}(t))\,dt
+\sigma(\bar{X}^{i,N}(t),\bar{\mu}^{N}(t))u_{i}(t)\,dt
\nonumber\\[-8pt]\\[-8pt]
&&{}  +\sigma(\bar{X}^{i,N}(t),\bar{\mu}^{N}(t)
)\,dW^{i}(t),\qquad \bar{X}^{i,N}(0)=x^{i,N},
\nonumber
\end{eqnarray}
where $\bar{\mu}^{N}(t)$ and $\bar{\mu}^{N}$ are the empirical measures
of $\bar{X}^{i,N}(t)$ and $\bar{X}^{i,N}$, respectively,
\[
\bar{\mu}^{N}(t,\omega)\doteq\frac{1}{N}\sum_{i=1}^{N}\delta_{\bar
{X}^{i,N}(t,\omega)}, \qquad\bar{\mu}_{\omega}^{N}\doteq\frac
{1}{N}\sum_{i=1}^{N}\delta_{\bar{X}^{i,N}(\cdot,\omega)}, \qquad\omega\in
\Omega.
\]
The ``barred'' symbols in the display
above and in \eqref{EqPrelimitControlSDE} refer to objects
depending on a control, here $u$. We adopt this as a convention and
indicate control-dependent objects by overbars. The existence and
uniqueness of strong solutions to \eqref{EqPrelimitControlSDE}
will be a consequence of assumption \hyperlink{APrelimitSolutions}{(A3)} made in
Section \ref{SectLPLimit}; see comments below assumption
\hyperlink{ATightness}{(A5)} there.

It will be convenient to have a path space which is Polish for the
components $u_{i}$, $i\in\{1,\ldots,N\}$, of a control process $u \in
\mathcal{U}_{N}$. We choose the space of deterministic relaxed controls
on $\mathbb{R}^{d_{1}}\times[0,T]$ with finite first moments. Let us
first recall some facts about deterministic relaxed controls
(see, e.g.,~\cite{kushner90}, Section 3.2, for the case of a compact space
of control actions). Denote by $\mathcal{R}$ the space of all
deterministic relaxed controls on $\mathbb{R}^{d_{1}}\times[0,T]$,
that is, $\mathcal{R}$ is the set of all positive measures $r$ on
$\mathcal{B}(\mathbb{R}^{d_{1}} \times [0,T])$ such that $r(\mathbb
{R}^{d_{1}} \times [0,t])=t$ for all $t\in[0,T] $. If $r\in\mathcal
{R}$ and $B\in\mathcal{B}(\mathbb{R}^{d_{1}})$, then the mapping
$[0,T]\ni t\mapsto r(B \times [0,t])$ is absolutely continuous, hence
differentiable almost everywhere. Since $\mathcal{B}(\mathbb
{R}^{d_{1}})$ is countably generated, the time derivative of $r$ exists
almost everywhere and is a measurable mapping $r_{t} \dvtx
[0,T]\rightarrow\mathcal{P}(\mathbb{R}^{d_{1}})$ such that $r(dy
\times dt) = r_{t}(dy)\,dt$.

Denote by $\mathcal{R}_{1}$ the space of deterministic relaxed controls
with finite first moments, that is,
\[
\mathcal{R}_{1}\doteq\biggl\{ r\in\mathcal{R}\dvtx\int_{\mathbb
{R}^{d_{1}}\times[0,T]}|y| r(dy \times dt)<\infty\biggr\}.
\]
By definition, $\mathcal{R}_{1}\subset\mathcal{R}$. The topology of
weak convergence of measures turns~$\mathcal{R}$ into a Polish space
(not compact in our case). We equip $\mathcal{R}_{1}$ with the topology
of weak convergence of measures plus convergence of first moments. This
topology turns $\mathcal{R}_{1}$ into a Polish space (cf.
\cite{rachev91}, Section 6.3). It is related to the Monge--Kantorovich
distances. For $T=1$ (else one has to renormalize), the topology
coincides with that induced by the Monge--Kantorovich distance with
exponent one, also called the Kantorovich--Rubinstein distance or
Wasserstein distance of order one. The topology is convenient because
the controls appear in an unbounded (but affine) fashion in the
dynamics. Thus ordinary weak convergence will not imply convergence of
corresponding integrals, but convergence in $\mathcal{R}_{1}$ will.

Any $\mathbb{R}^{d_{1}}$-valued process $v$ defined on some probability
space $(\tilde{\Omega},\tilde{\mathcal{F}},\tilde{\Prb})$ induces an
$\mathcal{R}$-valued random variable $\rho$ according to
\begin{eqnarray} \label{ExControlRelaxation}
\rho_{\omega}(B\times I)\doteq\int_{I}\delta_{v(t,\omega
)}(B)\,dt, \nonumber\\[-8pt]\\[-8pt]
&&\eqntext{B\in\mathcal{B}(\mathbb{R}^{d_{1}}),
I\subset[0,T],
\omega\in\tilde{\Omega}.}
\end{eqnarray}
If $v$ is such that $\int_{0}^{T} |v(t,\omega)|\,dt < \infty$ for all
$\omega\in\tilde{\Omega}$, then the induced random variable $\rho$
takes values in $\mathcal{R}_{1}$. If $v$ is progressively measurable
with respect to a filtration $(\tilde{\mathcal{F}}_{t})$ in $\tilde
{\mathcal{F}}$, then $\rho$ is adapted in the sense that the mapping
$t\mapsto\rho(B \times [0,t])$ is $(\tilde{\mathcal
{F}}_{t})$-adapted for all $B\in\mathcal{B}(\mathbb{R}^{d_{1}})$
\cite{kushner90}, Section 3.3.

Given an adapted (in the above sense) $\mathcal{R}_{1}$-valued random
variable $\rho$ and a~Borel measurable mapping $\nu \dvtx [0,T]\rightarrow
\mathcal{P}(\mathbb{R}^{d})$, we will consider the controlled \mbox{SDE}
\begin{eqnarray} \label{EqParamLimitControlSDE}
d\bar{X}(t) &=& b(\bar{X}(t),\nu(t))\,dt  + \biggl( \int
_{\mathbb{R}^{d_{1}}}\sigma(\bar{X}(t),\nu(t))y \rho
_{t}(dy)\biggr)\,dt \nonumber\\[-8pt]\\[-8pt]
&&{} + \sigma(\bar{X}(t),\nu(t))\,dW(t),\qquad \bar{X}(0)
\sim\nu(0),
\nonumber
\end{eqnarray}
where $W$ is a $d_{1}$-dimensional $(\tilde{\mathcal{F}}_{t})$-adapted
standard Wiener process. Equation \eqref{EqParamLimitControlSDE} is a
parameterized version of \eqref{EqLimitControlSDE} below, the
controlled analog of the limit \mbox{SDE} \eqref{EqLimitSDE}. We will
only have to deal with weak solutions of \eqref
{EqParamLimitControlSDE} or, equivalently, with certain probability
measures on $\mathcal{B}(\mathcal{Z})$, where
\[
\mathcal{Z}\doteq\mathcal{X}\times\mathcal{R}_{1}\times\mathcal{W}.
\]
For a typical element in $\mathcal{Z}$ let us write $(\varphi,r,w)$ with
the understanding that $\varphi\in\mathcal{X}$, $r\in\mathcal{R}_{1}$,
$w\in\mathcal{W}$.

Notice that we include $\mathcal{W}$ as a component of our canonical
space $\mathcal{Z}$. This will allow identification of the\vspace*{1pt} joint
distribution of the control and driving Wiener process. Indeed, if the
triple $(\bar{X},\rho,W)$ defined on some filtered probability space
$(\tilde{\Omega},\tilde{\mathcal{F}},\tilde{\Prb},(\tilde{\mathcal
{F}}_{t}))$ solves \eqref{EqParamLimitControlSDE} for some
measurable $\nu \dvtx [0,T]\rightarrow\mathcal{P}(\mathbb{R}^{d})$, then
the distribution of $(\bar{X},\rho,W)$ under $\tilde{\Prb}$ is an
element of $\mathcal{P}(\mathcal{Z})$.

When \eqref{EqParamLimitControlSDE} is used the mapping $\nu
\dvtx[0,T]\rightarrow\mathcal{P}(\mathbb{R}^{d})$ appearing in the
coefficients will be determined by a probability measure on $\mathcal
{B}(\mathcal{Z})$. To be more precise, let $\Theta\in\mathcal
{P}(\mathcal{Z})$. Then $\Theta$ induces a mapping $\nu_{\Theta}
\dvtx[0,T]\rightarrow\mathcal{P}(\mathbb{R}^{d})$ which is defined by
\begin{equation} \label{ExQMarginal}
\nu_{\Theta}(t)(B)\doteq\Theta\bigl(\{(\varphi,r,w)\in\mathcal{Z}\dvtx\varphi
(t)\in B\}\bigr),\qquad  B\in\mathcal{B}(\mathbb{R}^{d}),
t\in[0,T].\hspace*{-12pt}
\end{equation}
By construction, $\nu_{\Theta}(t)$ is the distribution under $\Theta$
of the first component of the coordinate process on $\mathcal
{Z}=\mathcal{X}\times\mathcal{R}_{1}\times\mathcal{W}$ at time $t$.
Therefore, if $\Theta$ corresponds to a weak solution of \eqref
{EqParamLimitControlSDE} with $\nu=\nu_{\Theta}$, then $\Theta$ also
corresponds to a weak solution of the controlled limit \mbox{SDE}
\begin{eqnarray} \label{EqLimitControlSDE}
d\bar{X}(t) &=& b(\bar{X}(t),\Law(\bar{X}(t)))\,dt + \biggl(\int
_{\mathbb{R}^{d_{1}}}\sigma(\bar{X}(t),\Law(\bar{X}(t)))y\rho
_{t}(dy)\biggr)\,dt \hspace*{-26pt}\nonumber\\[-8pt]\\[-8pt]
&&{} +\sigma(\bar{X}(t),\Law(\bar{X}(t)))\,dW(t), \qquad\bar
{X}(0)\sim\nu_{\Theta}(0).
\nonumber
\end{eqnarray}
Here $W$ is a $d_{1}$-dimensional standard Wiener process defined on
some probability space $(\tilde{\Omega},\tilde{\mathcal{F}},\tilde{\Prb
})$ carrying a filtration $(\tilde{\mathcal{F}}_{t})$, and $\rho$ is
an $(\tilde{\mathcal{F}}_{t})$-adapted $\mathcal{R}_{1}$-valued random
variable such that $(\bar{X},\rho,W)$ has distribution $\Theta$ under~$\tilde{\Prb}$.
The process triple $(\bar{X},\rho,W)$ can be given
explicitly as the coordinate process on the probability space $(\mathcal
{Z},\mathcal{B}(\mathcal{Z}),\Theta)$ endowed with the canonical
filtration $(\mathcal{G}_{t})$ in $\mathcal{B}(\mathcal{Z})$. More
precisely, the processes $\bar{X}$, $\rho$, $W$ are defined on
$(\mathcal{Z},\mathcal{B}(\mathcal{Z}))$ by\looseness=-1
\begin{eqnarray*}
\bar{X}(t,(\varphi,r,w))&\doteq&\varphi(t),\qquad \rho(t,(\varphi,r,w))\doteq
r_{|\mathcal{B}(\mathbb{R}^{d_{1}}\times[0,t])},\\
W(t,(\varphi,r,w))&\doteq& w(t).
\end{eqnarray*}\looseness=0
Here we abuse notation and use $\rho(t,\cdot)$ to denote the restriction
of a measure defined on $\mathcal{B}(\mathbb{R}^{d_{1}}\times[0,T])$
to $\mathcal{B}(\mathbb{R}^{d_{1}}\times[0,t])$. The canonical
filtration is given by
\[
\mathcal{G}_{t}\doteq\sigma\bigl( (\bar{X}(s),\rho(s),W(s))\dvtx0\leq
s\leq t\bigr),\qquad  t\in[0,T].
\]
Notice that $\rho(s)$ takes values in the space of deterministic
relaxed controls on $\mathbb{R}^{d_{1}}\times[0,s]$ with finite
first moments.\vadjust{\goodbreak}

One of the assumptions we make below [assumption\vspace*{1pt} \hyperlink{ALimitSolution}{(A4)}
in Section \ref{SectLPLimit}] is the weak uniqueness
of solutions to \eqref{EqLimitControlSDE}. If $((\tilde{\Omega
},\tilde{\mathcal{F}},\tilde{\Prb}),(\tilde{\mathcal{F}}_{t}),(\bar
{X},\rho,W))$ is a~weak solution of \eqref{EqLimitControlSDE},
then $\tilde{\Prb}\circ(\bar{X},\rho,W)^{-1} \in\mathcal{P}(\mathcal
{Z})$. The property of weak uniqueness can therefore be formulated in
terms of probability measures on~$\mathcal{B}(\mathcal{Z})$.
\begin{defn} \label{DefUniqueness}
\textit{Weak uniqueness} is said to hold for \eqref
{EqLimitControlSDE} if whenever~$\Theta$, $\tilde{\Theta} \in\mathcal
{P}(\mathcal{Z})$ are such that $\Theta$, $\tilde{\Theta}$ both
correspond to weak solutions of \eqref{EqLimitControlSDE}, $\nu
_{\Theta}(0) = \nu_{\tilde{\Theta}}(0)$ and $\Theta_{|\mathcal
{B}(\mathcal{R}_{1}\times\mathcal{W})} = \tilde{\Theta}_{|\mathcal
{B}(\mathcal{R}_{1}\times\mathcal{W})}$, then $\Theta= \tilde{\Theta}$.
\end{defn}

Thus, weak uniqueness for \eqref{EqLimitControlSDE} means
that, given any initial distribution for the state process, the joint
distribution of control and driving Wiener process uniquely determines
the distribution of the solution triple.


\section{Laplace principle}
\label{SectLPLimit}

A function $I \dvtx \mathcal{P}(\mathcal{X})\rightarrow[0,\infty]$ is
called a \textit{rate function} if for each $M<\infty$ the set $\{\theta
\in\mathcal{P}(\mathcal{X})\dvtx I(\theta)\leq M\}$ is compact (some
authors call such functions \textit{good} rate functions). We say that a
Laplace principle holds for the family $\{\mu^{N}, N\in\mathbb{N}\}$
with rate function $I$ if for any bounded and continuous function
$F\dvtx\mathcal{P}(\mathcal{X})\rightarrow\mathbb{R}$,
\begin{equation} \label{EqLPLimit}
\lim_{N\rightarrow\infty} -\frac{1}{N} \log\Mean[ \exp
(-N\cdot F(\mu^{N}))] = \inf_{\theta\in\mathcal{P}(\mathcal
{X})} \{F(\theta) + I(\theta)\}.
\end{equation}
It is well known that in our setting the Laplace principle holds if and
only if $\{\mu^{N}, N\in\mathbb{N}\}$ satisfies a large deviation
principle with rate function $I$ \cite{dupuisellis97}, Section 1.2.

Let us make the following assumptions about the functions $b$, $\sigma$
and the family $\{x^{i,N}\} \subset\mathbb{R}^{d}$ of initial conditions:

\begin{longlist}[(A5)]
\item[(A1)]\hypertarget{AInitialCondition} For some $\nu_{0}\in\mathcal
{P}(\mathbb{R}^{d})$, $\frac{1}{N} \sum_{i=1}^{N}\delta
_{x^{i,N}}\rightarrow\nu_{0}$ as $N$ tends to infinity.

\item[(A2)]\hypertarget{ACoeffContinuity} The coefficients $b$, $\sigma$ are continuous.

\item[(A3)]\hypertarget{APrelimitSolutions} For all $N \in\mathbb{N}$, existence
and uniqueness of solutions holds in the strong sense for the system of
$N$ equations given by \eqref{EqPrelimitSDE}.

\item[(A4)]\hypertarget{ALimitSolution} Weak uniqueness of solutions holds for
\eqref{EqLimitControlSDE}.

\item[(A5)]\hypertarget{ATightness} If $u^{N} \in\mathcal{U}_{N}$, $N\in\mathbb
{N}$, are such that
\[
\sup_{N\in\mathbb{N}} \Mean\Biggl[ \frac{1}{N} \sum_{i=1}^{N}\int
_{0}^{T}|u_{i}^{N}(t)|^{2} \,dt \Biggr]  <\infty,
\]
then $\{\bar{\mu}^{N}, N\in\mathbb{N}\}$ is tight as a family of
$\mathcal{P}(\mathcal{X})$-valued random variables, where $\bar{\mu
}^{N}$ is the empirical measure of the solution to the system of
\eqref{EqPrelimitControlSDE} under $u^{N}$.
\end{longlist}

Assumption \hyperlink{AInitialCondition}{(A1)} is a sort of law of large numbers
for the deterministic initial conditions. The assumption is
necessary
for the convergence of the empirical measures $\mu^{N}$ associated
with the state process. The continuity assumption \hyperlink{ACoeffContinuity}{(A2)}\vadjust{\goodbreak}
implies that the coefficients $b$, $\sigma$ are
uniformly continuous and uniformly bounded on sets $B\times P$, where
$B\subset\mathbb{R}^{d}$ is bounded and $P\subset\mathcal{P}(\mathbb
{R}^{d})$ is compact.

Assumption \hyperlink{APrelimitSolutions}{(A3)} about strong existence and
uniqueness of solutions for the prelimit model will be needed to
justify a variational representation for the cumulant generating
functionals appearing in \eqref{EqLPLimit}; see \eqref
{EqVariationalPrelimit} below. Assumption \hyperlink{APrelimitSolutions}{(A3)}
and an application of Girsanov's theorem show that \eqref
{EqPrelimitControlSDE} has a unique strong solution whenever $\int
_{0}^{T}|u(t)|^{2}\,dt\leq M$ $\Prb$-almost surely for some $M\in
(0,\infty)$. In fact, there is a Borel measurable mapping
$h^{N}=(h_{1}^{N},\ldots,h_{N}^{N})$ with $h_{i}^{N} \dvtx\Omega
\rightarrow\mathcal{X}$, $i\in\{1,\ldots,N\}$, such that, for $\Prb
$-almost all $\omega\in\Omega$, the unique strong solution of \eqref
{EqPrelimitSDE} is given as
\[
X^{i,N}(\cdot,\omega) = h_{i}^{N}(W(\cdot,\omega)),
\]
and under the above integrability condition on $u$, the unique strong
solution of \eqref{EqPrelimitControlSDE} equals $\Prb$-almost surely
\[
\bar{X}^{i,N}(\cdot,\omega)=h_{i}^{N}\biggl( W(\cdot,\omega)+\int
_{0}^{\cdot}u(s,\omega)\,ds \biggr).
\]
By a localization argument one can now show that \eqref
{EqPrelimitControlSDE} in fact has a unique strong solution for all
$u\in\mathcal{U}_{N}$, which is once more given by the above relation.

Weak uniqueness as stipulated in \hyperlink{ALimitSolution}{(A4)} for the
controlled nonlinear diffusions given by \eqref
{EqLimitControlSDE} is meant in the sense of Definition \ref
{DefUniqueness}. It is typical that such weak uniqueness holds if it
holds for the uncontrolled system (\ref{EqLimitSDE}).

Grant assumption \hyperlink{AInitialCondition}{(A1)}. Then
assumptions \hyperlink{ACoeffContinuity}{(A2)}--\hyperlink{ATightness}{(A5)} are all
satisfied if~$b$, $\sigma$ are uniformly Lipschitz [with respect to
the bounded Lipschitz metric on $\mathcal{P}(\mathbb{R}^{d})$] or
locally Lipschitz satisfying a suitable coercivity condition. A~simple
example of such a condition on $b$, $\sigma$ would be that for some
constant $C > 0$, all $x\in\mathbb{R}^{d}$ and all $\nu\in\mathcal
{P}(\mathbb{R}^{d})$,
\[
2\langle b(x,\nu),x\rangle+\trace(\sigma{\sigma}^{\mathsf{T}}
)(x,\nu)\leq C( 1+|x|^{2}).
\]

The reason for assumption \hyperlink{ATightness}{(A5)} being stated as it is, is
that there are many different sets of conditions on the problem data
(i.e., $b$ and $\sigma$) and the initial conditions which imply
tightness of the empirical measures of the $\bar{X}^{i,N}$. For
instance, \hyperlink{ATightness}{(A5)} is automatically satisfied if the
coefficients are bounded. It also holds if $b$, $\sigma$ are Lipschitz
continuous. More general conditions can be formulated in terms of the
action of the infinitesimal generator associated with \eqref
{EqLimitControlSDE}, given in \eqref{ExLimitGenerator} below, on some
``Lyapunov function'' $\varphi \dvtx \mathbb
{R}^{d}\rightarrow\mathbb{R}$; also see Section \ref
{SectExtensionsComparison}.

For a probability measure $\Theta\in\mathcal{P}(\mathcal{Z})$,
recalling that $\mathcal{Z}=\mathcal{X}\times\mathcal{R}_{1}\times
\mathcal{W}$, let~$\Theta_{\mathcal{X}}$, $\Theta_{\mathcal{R}}$ denote
the first and second marginal, respectively. Let $\mathcal{P}_{\infty
}$ be the set of all probability measures $\Theta\in\mathcal
{P}(\mathcal{Z})$ such that:

\begin{longlist}
\item\vspace*{-6pt}
\[
\int_{\mathcal{R}_{1}}\int_{\mathbb{R}^{d_{1}}\times[0,T]} |y|^{2}
r(dy \times dt)\Theta_{\mathcal{R}}(dr) <\infty;
\]

\item$\Theta$ corresponds to a weak solution of \eqref
{EqLimitControlSDE};\vadjust{\goodbreak}

\item$\nu_{\Theta}(0) = \nu_{0}$, where $\nu_{0}\in\mathcal
{P}(\mathbb{R}^{d})$ is the initial distribution from assumption~\hyperlink{AInitialCondition}{(A1)}.
\end{longlist}

The main result of this paper is the following.
\begin{thrm} \label{ThLaplacePrinciple}
Suppose that assumptions \textup{\hyperlink{AInitialCondition}{(A1)}--\hyperlink{ATightness}{(A5)}} hold. Then the
family of empirical measures $\{\mu^{N},N\in\mathbb{N}\}$ satisfies
the Laplace principle with rate function
\[
I(\theta) = \inf_{\Theta\in\mathcal{P}_{\infty}\dvtx\Theta_{\mathcal
{X}}=\theta}  \frac{1}{2}\int_{\mathcal{R}} \int_{\mathbb
{R}^{d_{1}}\times[0,T]} |y|^{2}  r(dy \times dt)\Theta_{\mathcal{R}}(dr).
\]
\end{thrm}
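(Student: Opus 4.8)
The plan is to establish the Laplace principle via the weak convergence (Dupuis--Ellis) approach, using the variational representation of Boué--Dupuis for functionals of Brownian motion. The first step is to write, for a bounded continuous $F\colon\mathcal{P}(\mathcal{X})\to\mathbb{R}$,
\begin{equation*}
	-\frac{1}{N}\log\Mean\bigl[\exp(-N F(\mu^N))\bigr]
	= \inf_{u\in\mathcal{U}_N}\Mean\Bigl[\frac{1}{2N}\sum_{i=1}^N\int_0^T|u_i(t)|^2\,dt + F(\bar\mu^N)\Bigr],
\end{equation*}
where $\bar\mu^N$ is the empirical measure of the solution of the controlled system \eqref{EqPrelimitControlSDE} driven by $u$; this is \eqref{EqVariationalPrelimit}, justified by Assumption~(A\ref{APrelimitSolutions}) and the representation $\bar X^{i,N}=h_i^N(W^i+\int_0^\cdot u_i)$. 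The proof then splits into a Laplace upper bound (i.e.\ the $\liminf$ of the left side is $\geq\inf_\theta\{F(\theta)+I(\theta)\}$) and a Laplace lower bound (the $\limsup$ is $\leq$ the same infimum). For the lower bound one takes near-optimal controls $u^N$ with cost uniformly bounded (one may truncate $F$ to get such a bound), encodes the controls as $\mathcal{R}_1$-valued random variables $\rho^N$ via \eqref{ExControlRelaxation}, forms the $\mathcal{P}(\mathcal{Z})$-valued occupation measures $Q^N\doteq\frac1N\sum_i\delta_{(\bar X^{i,N},\rho_i^N,W^i)}$, and uses Assumption~(A\ref{ATightness}) together with the moment bound to get tightness of $\{Q^N\}$ (and of $\bar\mu^N=Q^N_{\mathcal X}$) in $\mathcal{P}(\mathcal{P}(\mathcal{Z}))$. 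Then one identifies any weak limit point $Q$: by passing to the limit in the martingale problem associated with \eqref{EqPrelimitControlSDE} (this is where Section~\ref{SectAuxiliary} enters) one shows $Q$ is supported on triples solving the parameterized controlled SDE \eqref{EqParamLimitControlSDE} with $\nu=\nu_{\langle Q\rangle}$ where $\langle Q\rangle=\int\Theta\,Q(d\Theta)$ is the mean measure; exchangeability and a law-of-large-numbers argument force $\langle Q\rangle$ itself to correspond to a weak solution of \eqref{EqLimitControlSDE}, hence $\langle Q\rangle\in\mathcal{P}_\infty$. Lower semicontinuity of the quadratic cost in the $\mathcal{R}_1$ topology, Fatou, and the definition of $I$ then give $\liminf(\cdot)\geq \Mean[\tfrac12\int|y|^2\,r(\cdots) ] + F(\langle Q\rangle_{\mathcal X}) \geq I(\langle Q\rangle_{\mathcal X})+F(\langle Q\rangle_{\mathcal X})$, and one optimizes; a small subtlety is that the cost depends on $Q$ through the average over the fiber, so one uses convexity of $|y|^2$ to bound $\tfrac12\int|y|^2\,\Theta_{\mathcal R}(dr)$ summed against $Q$ from below by the cost of $\langle Q\rangle$.

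For the upper bound one argues in the reverse direction: fix $\epsilon>0$ and choose $\Theta^*\in\mathcal{P}_\infty$ with $\Theta^*_{\mathcal X}=\theta^*$ nearly optimal for $\inf_\theta\{F(\theta)+I(\theta)\}$ and with finite second moment of $\rho$ by construction of $\mathcal{P}_\infty$. One must build controls $u^N\in\mathcal{U}_N$ for the prelimit system whose occupation measures converge to (a measure with mean) $\Theta^*$ and whose normalized cost converges to $\tfrac12\int|y|^2\,\Theta^*_{\mathcal R}(dr)$. The natural device is to take $\rho$ a relaxed control realizing $\Theta^*$ on the canonical space $\mathcal{Z}$, approximate its derivative $\rho_t$ by an ordinary (non-relaxed), bounded, adapted control via standard chattering/mollification so that it becomes an admissible $u$, then use $N$ i.i.d.\ copies of this construction (driven by the given Wiener processes $W^i$) to define $u^N=(u_1^N,\dots,u_N^N)$; by the law of large numbers $\bar\mu^N\to\theta^*$ weakly and the cost converges. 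Feeding this into the variational representation gives $\limsup(-\tfrac1N\log\Mean[e^{-NF(\mu^N)}])\leq F(\theta^*)+\tfrac12\int|y|^2\,\Theta^*_{\mathcal R}(dr)+o(1)\leq \inf_\theta\{F(\theta)+I(\theta)\}+\epsilon+o(1)$. One also needs to check $I$ is a (good) rate function: the sublevel sets are compact because the cost functional is coercive in $\mathcal{R}_1$ and lower semicontinuous, $\mathcal{P}_\infty$ is closed under the relevant convergence, and the map $\Theta\mapsto\Theta_{\mathcal X}$ is continuous; boundedness of the $\mathcal X$-marginals follows from the moment bounds via Assumption~(A\ref{ATightness})-type estimates, and the infimum defining $I$ is attained by lower semicontinuity.

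The main obstacle, and the technical heart of the argument, is the identification of the weak limit $Q$ in the lower bound: one must show that a limit point of the occupation measures $Q^N$ is concentrated on solutions of the limiting controlled martingale problem \emph{with the self-consistent coefficient} $\nu=\nu_{\langle Q\rangle}$. This has two delicate ingredients. First, passing to the limit in the stochastic integral terms: because the controls enter through $\int\sigma(\cdot)y\,\rho_t^N(dy)$ and the $\rho^N$ live in the unbounded space $\mathcal{R}_1$, ordinary weak convergence is insufficient — one genuinely needs convergence in the $\mathcal{R}_1$ (Wasserstein-1) topology plus the uniform second-moment bound to justify uniform integrability and pass to the limit in the drift; simultaneously one must handle the diffusion term $\sigma(\bar X^{i,N},\bar\mu^N)dW^i$ by the martingale-problem formulation rather than by Itô isometry directly, using continuity of $\sigma$ from Assumption~(A\ref{ACoeffContinuity}). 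Second, the coupling through the empirical measure: $\bar\mu^N(t)$ appears inside the coefficients of every particle, so to close the limit one must show $\bar\mu^N(t)\to\nu_{\langle Q\rangle}(t)$ and, crucially, that $\langle Q\rangle$ is a \emph{fixed point}, i.e.\ the $\mathcal X$-marginal flow of $\langle Q\rangle$ equals $\nu_{\langle Q\rangle}$ — this is where Assumption~(A\ref{ALimitSolution}), weak uniqueness for \eqref{EqLimitControlSDE}, is invoked to pin down $\langle Q\rangle$ and ensure $\langle Q\rangle\in\mathcal{P}_\infty$. Exchangeability of the particles (from the i.i.d.\ Wiener processes and symmetric dynamics) is what makes the mean measure $\langle Q\rangle$, rather than $Q$ itself, the right object, and a propagation-of-chaos-style argument under the control is needed to conclude.
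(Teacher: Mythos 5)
Your overall architecture (Bou\'e--Dupuis representation, occupation measures $Q^{N}$, tightness via (A\ref{ATightness}), identification of limits through the controlled martingale problem, and a separate construction for the upper bound) matches the paper's strategy, but the lower-bound identification contains a genuine gap. You identify the limit through the \emph{mean} measure $\langle Q\rangle=\int\Theta\,Q(d\Theta)$: you assert that limit points are supported on solutions of the parameterized equation with coefficient $\nu_{\langle Q\rangle}$, invoke a propagation-of-chaos/fixed-point argument plus weak uniqueness (A\ref{ALimitSolution}) to force $\langle Q\rangle\in\mathcal{P}_{\infty}$, and conclude with $\liminf(\cdot)\geq \Mean[\cdots]+F(\langle Q\rangle_{\mathcal{X}})$. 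This does not work. Under nontrivial controls the limiting random measure is genuinely random (there is no propagation of chaos forcing it to be deterministic or equal to its mean), the self-consistency in the coefficients holds \emph{realization-wise} --- the empirical measure $\bar\mu^{N}$ entering $b$ and $\sigma$ converges, together with $Q^{N}$, to the $\mathcal{X}$-marginal of the same realization, so the correct statement is that $Q_{\omega}$ corresponds to a weak solution of \eqref{EqLimitControlSDE} with coefficient $\nu_{Q_{\omega}}$ for almost every $\omega$ --- and, crucially, since $F$ is only bounded and continuous (not affine), $\Mean[F(Q_{\omega,\mathcal{X}})]\neq F(\langle Q\rangle_{\mathcal{X}})$ in general; no convexity trick repairs the $F$-term. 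The paper's route (Lemma~\ref{LemmaLimitMeasure}) avoids all of this: it shows each realization $Q_{\omega}$ solves the local martingale problem (via randomized stopping times, continuity of the test functionals $\Phi$, and an It\^o-isometry estimate giving $\Mean[\Phi(Q^{n})^{2}]\to 0$), so that $Q_{\omega}\in\mathcal{P}_{\infty}$ almost surely (the second-moment condition following from Fatou), and then bounds the integrand in \eqref{EqFOMeasureLaplace} pointwise in $\omega$ by $\inf_{\Theta\in\mathcal{P}_{\infty}}J^{F}_{\infty}(\Theta)$ before taking expectations. Note also that (A\ref{ALimitSolution}) is not needed in the lower bound at all; it is needed only in the upper bound.

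On the upper bound, two further points. First, your appeal to ``the law of large numbers'' for $\bar\mu^{N}\to\theta^{*}$ glosses over the fact that the controlled particles are coupled through $\bar\mu^{N}$; the paper instead reuses Lemma~\ref{LemmaLimitMeasure} to show limit points of $\tilde Q^{N}$ are weak solutions, uses Varadarajan's theorem (i.i.d.\ copies of $(\rho,W)$) to pin the $(\mathcal{R}_{1}\times\mathcal{W})$-marginal, and only then invokes weak uniqueness (A\ref{ALimitSolution}) to conclude $\tilde Q_{\omega}=\Theta\circ(\bar X,\rho,W)^{-1}$. Second, the chattering/mollification step to produce bounded ordinary controls is unnecessary and problematic: replacing the relaxed control by its mean $u(t)=\int_{\mathbb{R}^{d_{1}}}y\,\rho_{t}(dy)$ already yields an admissible control with no larger cost (Jensen), whereas a genuine approximation of the control changes the joint law of $(\rho,W)$ and would require a stability-in-the-control property of solutions of \eqref{EqLimitControlSDE} that the assumptions (which give only weak uniqueness) do not supply.
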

\begin{rmk} \label{RmAltRate}
The above expression for the rate function $I$ is convenient for
proving the Laplace principle. An alternative and perhaps more familiar
form of the rate function is the following. By definition of $\mathcal
{P}_{\infty}$, and since the control appears linearly in the limit
dynamics, we can write
\[
I(\theta)=\inf_{\Theta\in\mathcal{P}_{\infty}\dvtx\Theta
_{\mathcal{X}}=\theta} \Mean_{\Theta}\biggl[ \frac{1}{2}\int
_{0}^{T}|u(t)|^{2}\,dt\biggr],
\]
where $\inf\varnothing\doteq\infty$ by convention, $u(t)=\int_{\mathbb
{R}^{d_{1}}}y\rho_{t}(dy)$, $(\bar{X},W,\rho)$ is the canonical
process on $(\mathcal{Z},\mathcal{B}(\mathcal{Z}))$, and $\Theta
$-almost surely $\bar{X}$ satisfies
\begin{equation} \label{EqLimitControlSDE2}
d\bar{X}(t) = b(\bar{X}(t),\theta(t))\,dt +\sigma(\bar
{X}(t),\theta(t))u(t)\,dt
+\sigma(\bar{X}(t),\theta(t))\,dW(t).\hspace*{-28pt}
\end{equation}
\end{rmk}

The proof of Theorem \ref{ThLaplacePrinciple} is based on a
representation for functionals of Brownian motion, a martingale
characterization of weak solutions of \eqref
{EqLimitControlSDE} and weak convergence arguments.

By assumption \hyperlink{APrelimitSolutions}{(A3)}, for each $N\in\mathbb{N}$,
the $N$-particle system of \eqref{EqPrelimitSDE} possesses a
unique strong solution for the given initial condition. By Theorem~3.6
in \cite{budhirajadupuis00}, for any $F\in\mathbf{C}_{b}(\mathcal{X})$
the prelimit expressions in \eqref{EqLPLimit} can be rewritten as
\begin{eqnarray} \label{EqVariationalPrelimit}
&&-\frac{1}{N}\log\Mean\bigl[ \exp\bigl(-N\cdot F(\mu^{N})\bigr)
\bigr] \nonumber\\[-8pt]\\[-8pt]
&&\qquad = \inf_{u^{N}\in\mathcal{U}_{N}}\Biggl\{ \frac{1}{2}\Mean
\Biggl[ \frac{1}{N}\sum_{i=1}^{N}\int_{0}^{T}|u_{i}^{N}(t)|^{2}\,dt\Biggr] +
\Mean[ F(\bar{\mu}^{N})] \Biggr\},
\nonumber
\end{eqnarray}
where $\bar{\mu}^{N}$ is the empirical measure of the solution to the
system of \eqref{EqPrelimitControlSDE} under
$u^{N}=(u_{1}^{N},\ldots,u_{N}^{N})\in\mathcal{U}_{N}$. The
representation in \cite{budhirajadupuis00} applies to an infinite-dimensional Brownian motion, and thus strictly speaking the infimum
would be over a collection of controls indexed by $i\in\mathbb{N}$.
However, since those controls with $i>N$ have no effect on $\bar{\mu
}^{N}$ we can and will assume they are zero.

Based on \eqref{EqVariationalPrelimit}, the Laplace principle
will be established in two steps. First, in Section \ref
{SectLPLowerBound}, we establish the variational lower bound by
showing\vadjust{\goodbreak}
that for any sequence $(u^{N})_{N\in\mathbb{N}}$ with $u^{N}\in
\mathcal{U}_{N}$,
\begin{eqnarray} \label{EqVariationalLowerBound}
&& \liminf_{N\rightarrow\infty} \Biggl\{ \frac{1}{2}\Mean\Biggl[ \frac
{1}{N}\sum_{i=1}^{N}\int_{0}^{T}|u_{i}^{N}(t)|^{2}\,dt\Biggr] + \Mean
[ F(\bar{\mu}^{N})] \Biggr\} \nonumber\\[-8pt]\\[-8pt]
&&\qquad  \geq  \inf_{\Theta\in\mathcal{P}_{\infty}} \biggl\{\frac
{1}{2}\int_{\mathcal{R}}\int_{\mathbb{R}^{d_{1}}\times[0,T]} |y|^{2}
r(dy \times dt)\Theta_{\mathcal{R}}(dr) + F(\Theta_{\mathcal{X}})
\biggr\}.\nonumber
\end{eqnarray}
Second, in Section \ref{SectLPUpperBound}, we verify the variational
upper bound by showing that for any measure $\Theta\in\mathcal
{P}_{\infty}$ there is a sequence $(u^{N})_{N\in\mathbb{N}}$ with
$u^{N}\in\mathcal{U}_{N}$ such that
\begin{eqnarray} \label{EqVariationalUpperBound}
&& \limsup_{N\rightarrow\infty} \Biggl\{ \frac{1}{2}\Mean\Biggl[ \frac
{1}{N} \sum_{i=1}^{N}\int_{0}^{T}|u_{i}^{N}(t)|^{2}\,dt\Biggr] + \Mean
[ F(\bar{\mu}^{N})] \Biggr\}\nonumber\\[-8pt]\\[-8pt]
&&\qquad  \leq  \frac{1}{2}\int_{\mathcal{R}}\int_{\mathbb
{R}^{d_{1}}\times[0,T]} |y|^{2}  r(dy \times dt)\Theta_{\mathcal
{R}}(dr) + F(\Theta_{\mathcal{X}}).
\nonumber
\end{eqnarray}

To see that those two steps establish Theorem \ref{ThLaplacePrinciple},
first observe that
\begin{eqnarray*}
\inf_{\theta\in\mathcal{P}(\mathcal{X})}\biggl\{ F(\theta)+\inf
_{\Theta\in\mathcal{P}_{\infty}\dvtx\Theta_{\mathcal{X}}=\theta}\biggl\{
\frac{1}{2}\int_{\mathcal{R}}\int_{\mathbb{R}^{d_{1}}\times[0,T]}
|y|^{2} r(dy \times dt)\Theta_{\mathcal{R}}(dr)\biggr\} \biggr\} \\
= \inf_{\Theta\in\mathcal{P}_{\infty}}\biggl\{ \frac{1}{2}\int
_{\mathcal{R}}\int_{\mathbb{R}^{d_{1}}\times[
0,T]} |y|^{2} r(dy \times dt)\Theta_{\mathcal{R}}(dr)+F(\Theta
_{\mathcal{X}})\biggr\}.
\end{eqnarray*}
Hence, in view of \eqref{EqVariationalPrelimit}, we have to show that
for all $F\in\mathbf{C}_{b}(\mathcal{X})$,
\[
\inf_{u\in\mathcal{U}_{N}}J_{N}^{F}(u)\stackrel{N\rightarrow\infty
}{\longrightarrow}\inf_{\Theta\in\mathcal{P}_{\infty}} J_{\infty
}^{F}(\Theta),
\]
where
\begin{eqnarray*}
J_{N}^{F}(u)& \doteq &\frac{1}{2}\Mean\Biggl[ \frac{1}{N}\sum_{i=1}^{N}
\int_{0}^{T}|u_{i}(t)|^{2}\,dt\Biggr] + \Mean[ F(\bar{\mu
}^{N})],\\
J_{\infty}^{F}(\Theta) &\doteq& \frac{1}{2}\int_{\mathcal{R}}\int
_{\mathbb{R}^{d_{1}}\times[0,T]} |y|^{2} r(dy \times
dt)\Theta_{\mathcal{R}}(dr)+F(\Theta_{\mathcal{X}}).
\end{eqnarray*}
Let $\varepsilon> 0$. For the lower bound, choose $u^{N}\in\mathcal
{U}_{N}$, $N\in\mathbb{N}$, such that $J_{N}^{F}(u^{N})\leq\inf_{u\in
\mathcal{U}_{N}}J_{N}^{F}(u)+\varepsilon$. Then \eqref
{EqVariationalLowerBound} implies that
\[
\liminf_{N\rightarrow\infty}\inf_{u\in\mathcal
{U}_{N}}J_{N}^{F}(u)\geq\inf_{\Theta\in\mathcal{P}_{\infty}}
J_{\infty}^{F}(\Theta) - \varepsilon.
\]
For the upper bound, choose a probability measure $\Theta\in\mathcal
{P}_{\infty}$ such that $J_{\infty}^{F}(\Theta)\leq\inf_{\Theta\in
\mathcal{P}_{\infty}} J_{\infty}^{F}(\Theta)+\varepsilon$. Since $\inf
_{u\in\mathcal{U}_{N}} J_{N}^{F}(u)\leq J_{N}^{F}(\tilde{u})$ for any
$\tilde{u}\in\mathcal{U}_{N}$, \eqref{EqVariationalUpperBound} implies that
\[
\limsup_{N\rightarrow\infty}\inf_{u\in\mathcal
{U}_{N}}J_{N}^{F}(u)\leq\inf_{\Theta\in\mathcal{P}_{\infty}}
J_{\infty}^{F}(\Theta) + \varepsilon.
\]
Since $\varepsilon> 0$ is arbitrary, the assertion follows.\vadjust{\goodbreak}

There is a technical observation to be made about the probability
spaces and filtrations underlying the stochastic control problems,
namely that there is a certain flexibility in the choice of the the
stochastic bases. This flexibility will be needed in establishing the
variational upper bound. To be more precise we note that the
representation theorem in \cite{budhirajadupuis00} holds for any
stochastic basis rich enough to carry a sequence of independent
standard $(\tilde{\mathcal{F}}_{t})$-Wiener processes. The
filtration~$(\tilde{\mathcal{F}}_{t})$, which is assumed to satisfy the usual
conditions, need not be the filtration induced by the Wiener processes,
but may be strictly larger. As a consequence of assumption
\hyperlink{APrelimitSolutions}{(A3)}, the left-hand side of \eqref
{EqVariationalPrelimit} does not depend on the choice of the stochastic
basis. The stochastic optimal control problem on the right-hand side of
\eqref{EqVariationalPrelimit} can therefore be regarded in the weak
sense, that is, the infimum is taken over all suitable stochastic bases
(see Definition 4.2 in \cite{yongzhou99}, page 64). The definition of
the sets $\mathcal{U}_{N}$ and assumption \hyperlink{ATightness}{(A5)} are to be
understood accordingly.

As a consequence of the weak formulation of the control problems, in
the proof of the variational lower bound, the control processes
$u^{N}$, the driving Wiener processes $W^{1},\ldots,W^{N}$ and thus the
empirical measures $\bar{\mu}^{N}$ could live on stochastic bases which
vary with $N$. While we do not make this variation explicit, it is easy
to see that the arguments of Section \ref{SectLPLowerBound}, being weak
convergence arguments, do not rely on having a common filtered
probability space. The variational upper bound, on the other hand, will
be established in Section \ref{SectLPUpperBound} by taking an arbitrary
$\Theta\in\mathcal{P}_{\infty}$ and then constructing a sequence of
control processes and independent Wiener processes so that \eqref
{EqVariationalUpperBound} holds. The prelimit processes will be
coordinate processes on a common stochastic basis which, however, will
depend on the limit probability measure $\Theta$.


\section{Auxiliary constructions}
\label{SectAuxiliary}

This section collects useful results for characterizing those
probability measures in $\mathcal{P}(\mathcal{Z})$ which correspond to
a weak solution of \eqref{EqLimitControlSDE}. Let $\Theta\in\mathcal
{P}(\mathcal{Z})$. Recall from \eqref{ExQMarginal} the definition of
the mapping $\nu_{\Theta} \dvtx [0,T]\rightarrow\mathcal{P}(\mathbb
{R}^{d})$ induced by $\Theta$. The mapping $\nu_{\Theta}$ is
continuous. To check this, take any $t_{0} \in[0,T]$ and any sequence
$(t_{n}) \subset[0,T]$ such that $t_{n}\rightarrow t_{0}$. Then for
all $f\in\mathbf{C}_{b}(\mathbb{R}^{d})$, the fact that elements of
$\mathcal{X}$ are continuous and the bounded convergence theorem imply
\begin{eqnarray*}
\int_{\mathbb{R}^{d}}f(x) \nu_{\Theta}(t_{n})(dx) &=& \int_{\mathcal
{X}\times\mathcal{R}\times\mathcal{W}} f(\varphi(t_{n})) \Theta
(d\varphi \times dr \times dw) \\
&\stackrel{n\rightarrow\infty}{\longrightarrow}& \int_{\mathcal
{X}\times\mathcal{R}\times\mathcal{W}} f(\varphi(t_{0})) \Theta
(d\varphi \times dr \times dw) \\
&=& \int_{\mathbb{R}^{d}} f(x)\nu_{\Theta}(t_{0})(dx).
\end{eqnarray*}
Therefore $\nu_{\Theta}(t_{n})\rightarrow\nu_{\Theta}(t)$ in $\mathcal
{P}(\mathbb{R}^{d})$. The continuity of $\nu_{\Theta}$ implies that the
set $\{\nu_{\Theta}(t) \dvtx t \in[0,T]\}$ is compact in $\mathcal
{P}(\mathbb{R}^{d}) $.

The question of whether a probability measure $\Theta\in\mathcal
{P}(\mathcal{Z})$ corresponds to a weak solution of \eqref
{EqLimitControlSDE}. or, equivalently, of \eqref
{EqParamLimitControlSDE} with $\nu= \nu_{\Theta}$, can be conveniently
phrased in terms of an associated local martingale problem. We
summarize here the main facts that we will use (see
\cite{stroockvaradhan79}, \cite{kushner90}, Section~4.4, and
\cite{karatzasshreve91}, Section 5.4, e.g.).

Given $f\in\mathbf{C}^{2}(\mathbb{R}^{d}\times\mathbb{R}^{d_{1}})$,
define a real-valued process $(M_{f}^{\Theta}(t))_{t\in[0,T]}$
on the probability space $(\mathcal{Z},\mathcal{B}(\mathcal{Z}),\Theta
)$ by
\begin{eqnarray} \label{ExLimitMartingale}
M_{f}^{\Theta}(t,(\varphi,r,w))&\doteq& f(\varphi
(t),w(t))-f(\varphi(0),0) \nonumber\\[-8pt]\\[-8pt]
&&{}  -\int_{0}^{t}\int_{\mathbb{R}^{d_{1}}}\mathcal{A}_{s}^{\Theta
}(f)(\varphi(s),y,w(s))r_{s}(dy)\,ds,
\nonumber
\end{eqnarray}
where for $s\in[0,T]$, $x\in\mathbb{R}^{d}$, $y,z\in\mathbb
{R}^{d_{1}}$,
\begin{eqnarray} \label{ExLimitGenerator}
\mathcal{A}_{s}^{\Theta}(f)(x,y,z) &\doteq& \langle b(x,\nu
_{\Theta}(s))+\sigma(x,\nu_{\Theta}(s))y,\nabla
_{x}f(x,z)\rangle\nonumber\\
&&{}  +\frac{1}{2}\sum_{j,k=1}^{d}(\sigma{\sigma}^{\mathsf
{T}})_{jk}(x,\nu_{\Theta}(s))\,\frac{\partial^{2}f}{\partial
x_{j}\,\partial x_{k}}(x,z) \nonumber\\[-8pt]\\[-8pt]
&&{}  +\frac{1}{2}\sum_{l=1}^{d_{1}}\frac{\partial^{2}f}{\partial
z_{l}\,\partial z_{l}}(x,z) \nonumber\\
&&{}  +\sum_{k=1}^{d}\sum_{l=1}^{d_{1}}\sigma_{kl}(x,\nu
_{\Theta}(s))\,\frac{\partial^{2}f}{\partial x_{k}\,\partial z_{l}}(x,z).
\nonumber
\end{eqnarray}
The expression involving $\mathcal{A}_{s}^{\Theta}(f)$ in \eqref
{ExLimitMartingale} is integrated against time and the time derivative
measures $r_{s}$ of any relaxed control $r$. The measures $r_{s}$ are
actually not needed in that we may use $r(dy \times ds)$ in place of
$r_{s}(dy)\,ds$.

The key relation, which we formulate as a lemma, is a one-to-one
correspondence between weak solutions of \eqref
{EqLimitControlSDE} and a local martingale problem.

\begin{lemma} \label{LemmaMartingaleProblem}
Let $\Theta\in\mathcal{P}(\mathcal{Z})$ be such that \mbox{$\Theta(\{(\varphi
,r,w)\in\mathcal{Z}\dvtx w(0)=0\})=1$}. Then $\Theta$ corresponds to a weak
solution of \eqref{EqLimitControlSDE} if and only if
$M_{f}^{\Theta}$ is a~local martingale under $\Theta$ with respect to
the canonical filtration $(\mathcal{G}_{t})$ for all $f\in\mathbf
{C}^{2}(\mathbb{R}^{d}\times\mathbb{R}^{d_{1}})$.

Moreover, in order to show that $\Theta$ corresponds to a weak solution
of \eqref{EqLimitControlSDE}, it is enough to check the local
martingale property for those $M_{f}^{\Theta}$ where the test function
$f$ is a monomial of first or second order, that is, for the test functions
\begin{eqnarray*}
(x,z)&\mapsto& x_{k}, \qquad k\in\{1,\ldots,d\},\qquad (x,z)\mapsto
x_{j}x_{k},\qquad  j, k\in\{1,\ldots,d\},\\
(x,z)&\mapsto& z_{l}, \qquad l\in\{1,\ldots,d_{1}\},\qquad (x,z)\mapsto
z_{j}z_{l},\qquad  j, l\in\{1,\ldots,d_{1}\}, \\
(x,z)&\mapsto& x_{k}z_{l}, \qquad k\in\{1,\ldots,d\}, l\in\{1,\ldots
,d_{1}\}.
\end{eqnarray*}
\end{lemma}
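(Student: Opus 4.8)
The plan is to establish the equivalence between weak solutions of Eq.~\eqref{EqLimitControlSDE} and the stated local martingale problem by the standard Stroock--Varadhan correspondence, adapted to the relaxed-control setting and to the enlarged state space $\mathbb{R}^{d}\times\mathbb{R}^{d_{1}}$ that records both $\bar X$ and $W$. First I would note that since $\Theta(\{w(0)=0\})=1$, the coordinate process $W(t,(\phi,r,w))=w(t)$ starts at the origin, so the $w$-component is a candidate Wiener process. For the \textbf{``only if''} direction, suppose $(\bar X,\rho,W)$ on some $(\tilde\Omega,\tilde{\mathcal F},\tilde\Prb,(\tilde{\mathcal F}_t))$ is a weak solution with $\Theta=\tilde\Prb\circ(\bar X,\rho,W)^{-1}$; applying It\^o's formula to $f(\bar X(t),W(t))$ for $f\in\mathbf{C}^2(\mathbb{R}^d\times\mathbb{R}^{d_1})$, using the dynamics \eqref{EqLimitControlSDE} (with $\nu_\Theta(t)=\Law(\bar X(t))$) and the relaxed-control identity $\int_{\mathbb{R}^{d_1}}\sigma(\bar X(t),\nu_\Theta(t))y\,\rho_t(dy)$ for the drift contribution, one checks that the bounded-variation part is exactly $\int_0^t\int\mathcal{A}_s^\Theta(f)(\phi(s),y,w(s))\,r_s(dy)\,ds$ and the remainder is a stochastic integral against $W$, hence a local martingale with respect to $(\tilde{\mathcal F}_t)$. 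Since $M_f^\Theta$ is $(\mathcal G_t)$-adapted and the local martingale property transfers to the canonical filtration (the canonical filtration is contained in, and the conditional expectations agree with, the image of $(\tilde{\mathcal F}_t)$ under the solution map), $M_f^\Theta$ is a $(\mathcal G_t)$-local martingale under $\Theta$.

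For the \textbf{``if''} direction, assume $M_f^\Theta$ is a $(\mathcal G_t)$-local martingale for all $f\in\mathbf{C}^2$. I would apply this first to the linear test functions $(x,z)\mapsto x_k$ and $(x,z)\mapsto z_l$ to identify the drift and to recognize $W$ as a continuous $(\mathcal G_t)$-local martingale; then to the quadratic monomials $x_jx_k$, $z_jz_l$, $x_kz_l$ to compute the cross-variations: polarizing and subtracting the drift contributions (obtained from the linear cases) yields $\langle W_j,W_l\rangle_t=\delta_{jl}t$, so by L\'evy's characterization $W$ is a standard $d_1$-dimensional $(\mathcal G_t)$-Wiener process, and similarly $\langle \bar X_k,W_l\rangle_t=\int_0^t\sigma_{kl}(\bar X(s),\nu_\Theta(s))\,ds$ and $\langle \bar X_j,\bar X_k\rangle_t=\int_0^t(\sigma\sigma^{\mathsf T})_{jk}(\bar X(s),\nu_\Theta(s))\,ds$. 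From the semimartingale decomposition of $\bar X$ and these bracket computations, a standard martingale-representation argument (possibly after enlarging the probability space if $\sigma$ is degenerate, in the usual Stroock--Varadhan fashion, though since $W$ itself is already a component of $\mathcal Z$ no enlargement is needed here) shows that $\bar X$ satisfies the SDE \eqref{EqLimitControlSDE} driven by $W$ with coefficient data $\nu_\Theta$; and since $\nu_\Theta(t)$ is by construction the $\Theta$-law of $\bar X(t)$, this is precisely a weak solution of \eqref{EqLimitControlSDE}. The reduction to monomial test functions in the ``moreover'' part follows because the above bracket and drift identifications use only linear and quadratic $f$, and conversely any $\mathbf{C}^2$ test function's It\^o expansion is determined by the drift vector and diffusion matrix already pinned down by the monomials; alternatively one invokes that $\mathbf{C}^2$ functions can be locally approximated so that the local martingale property for monomials propagates to all of $\mathbf{C}^2$ via It\^o's formula applied to the now-identified semimartingale.

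The main obstacle I anticipate is handling the relaxed control rigorously: one must make sure that the progressive measurability / adaptedness of $\rho$ (in the sense that $t\mapsto\rho(B\times[0,t])$ is $(\mathcal G_t)$-adapted) is enough to give the $\int_{\mathbb{R}^{d_1}}\sigma(\cdot)y\,\rho_t(dy)$ drift term the right predictability for the stochastic-calculus manipulations, and that the a.e.-defined disintegration $r_t$ causes no measurability pathologies in \eqref{ExLimitMartingale}. A second technical point is the continuity and boundedness of $x\mapsto b(x,\nu_\Theta(s))$, $x\mapsto\sigma(x,\nu_\Theta(s))$ needed to justify the It\^o expansion and the local martingale bookkeeping: this is supplied by Assumption~(A\ref{ACoeffContinuity}) together with the compactness of $\{\nu_\Theta(s):s\in[0,T]\}$ in $\mathcal P(\mathbb{R}^d)$ established just before the lemma, which gives local uniform boundedness. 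Everything else is the routine Stroock--Varadhan/L\'evy machinery as in \cite{stroockvaradhan79}, \cite[Sect.~4.4]{kushner90}, and \cite[Sect.~5.4]{karatzasshreve91}, which I would cite rather than reproduce.
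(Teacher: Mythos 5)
Your proposal is correct and follows essentially the same route as the paper, which simply cites the standard Stroock--Varadhan/L\'evy argument of Proposition~5.4.6 in \cite[p.\,315]{karatzasshreve91}: It\^o's formula for the ``only if'' direction, identification of drift and quadratic covariations from the first- and second-order monomials plus L\'evy's characterization for the ``if'' direction, and the observation---also the paper's one substantive remark---that no extension of the probability space is needed even for degenerate $\sigma$ because the Wiener component is already part of the canonical space $\mathcal{Z}$. The technical points you flag (adaptedness of the relaxed control, transfer of the local martingale property to the canonical filtration via continuity, and local boundedness of the coefficients from (A\ref{ACoeffContinuity}) and the compactness of $\{\nu_{\Theta}(s):s\in[0,T]\}$) are exactly the ones that make the cited argument go through here.
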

\begin{pf}
See, for example, the proof of Proposition 5.4.6 in
\cite{karatzasshreve91}, page~315. Note that since the canonical process on the
sample space $(\mathcal{Z},\mathcal{B}(\mathcal{Z}))$ includes a
component which corresponds to the driving Wiener process, there is no
need to extend the probability space $(\mathcal{Z},\mathcal{B}(\mathcal
{Z}),\Theta)$ even if the diffusion coefficient $\sigma$ is degenerate.
\end{pf}
\begin{rmk}
There is a technical point here concerning the canonical filtration
$(\mathcal{G}_{t})$ in $\mathcal{B}(Z)$. That filtration is not
necessarily $\Theta$-complete or right-continuous, while in the
literature solutions to \mbox{SDEs} are usually defined with respect to
filtrations satisfying the usual conditions (i.e., containing all sets
contained in a set of measure zero and being right-continuous).
However, any stochastically continuous and uniformly bounded
real-valued process defined on some probability space $(\tilde{\Omega
},\tilde{\mathcal{F}},\tilde{\Prb})$ which is a martingale under~$\tilde
{\Prb}$ with respect to some filtration $(\tilde{\mathcal{F}}_{t})$, is
also a martingale under $\tilde{\Prb}$ with respect to $(\tilde{\mathcal
{F}}_{t+}^{\tilde{\Prb}})$, where $(\tilde{\mathcal{F}}_{t}^{\tilde{\Prb
}})$ denotes the $\tilde{\Prb}$-augmentation of $(\tilde{\mathcal
{F}}_{t})$ (see the solution to Exercise 5.4.13 in
\cite{karatzasshreve91}, page 392). The filtration $(\tilde{\mathcal
{F}}_{t+}^{\tilde{\Prb}})$ satisfies the usual conditions. Since the
localizing sequence of stopping times for a~local martingale can always
be chosen in such a way that the corresponding stopped processes are
bounded martingales, it follows that if $M_{f}^{\Theta}$ is a local
martingale under $\Theta$ with respect to $(\mathcal{G}_{t})$, then it
is also a local martingale under $\Theta$ with respect to $(\tilde
{\mathcal{G}}_{t+}^{\Theta})$. The local martingale property of the
processes~$M_{f}^{\Theta}$ under $\Theta$ with respect to the canonical
filtration $(\mathcal{G}_{t})$ thus implies that the canonical process
on $(\mathcal{Z},\mathcal{B}(\mathcal{Z}))$ solves \eqref
{EqLimitControlSDE} under $\Theta$ with respect to the filtration
$(\tilde{\mathcal{G}}_{t+}^{\Theta})$, which satisfies the usual conditions.
\end{rmk}
\begin{rmk}
The reason why we use a local martingale problem rather than the
corresponding martingale problem is that it gives more flexibility in
characterizing the convergence of It{\^{o}} processes which are not
necessarily of diffusion type. In Section \ref{SectExtensionsDelay},
we extend the Laplace principle of Theorem~\ref{ThLaplacePrinciple} to
interacting systems described by \mbox{SDEs} with delay. In that case,
the coefficients $b$, $\sigma$ are progressive functionals; thus, they
may depend on the entire trajectory of the solution process up to the
current time. An appropriate choice of the stopping times in the local
martingale problem gives control over the state process up to the
current time and not
only at the current time. In particular, the proof of Lemma \ref
{LemmaLimitMeasure} below, where the local martingale problem is used
to identify certain limit distributions, continues to work also for the
more general model of Section \ref{SectExtensionsDelay}.
\end{rmk}


\section{Variational lower bound}
\label{SectLPLowerBound}

In the proof of the lower bound (\ref{EqVariationalLowerBound}) we can
assume that
\begin{equation} \label{ExControlBound}
\Mean\Biggl[ \frac{1}{N}\sum_{i=1}^{N}\int
_{0}^{T}|u_{i}^{N}(t)|^{2}\,dt\Biggr] \leq  2\Vert F\Vert,
\end{equation}
since otherwise the desired inequality is automatic. Let $(u^{N})_{N\in
\mathbb{N}}$ be a sequence of control processes such that \eqref
{ExControlBound} holds. This implies in particular that for $\Prb
$-almost all $\omega\in\Omega$, all $N \in\mathbb{N}$, $i \in\{
1,\ldots,N\}$, $\int_{0}^{T}|u_{i}^{N}(t,\omega)|\,dt < \infty$.
Modifying the sequence $(u^{N})$ on a set of $\Prb$-measure zero has no
impact on the validity of \eqref{EqVariationalLowerBound}. Thus, we may
assume that $u^{N}_{i}(\cdot,\omega)$ has a finite first moment for all
$\omega\in\Omega$.

For each $N\in\mathbb{N}$, define a $\mathcal{P}(\mathcal{Z})$-valued
random variable by
\begin{equation} \label{ExFOMeasure}\quad
Q_{\omega}^{N}(B\times R\times D)\doteq  \frac{1}{N} \sum
_{i=1}^{N}\delta_{\bar{X}^{i,N}(\cdot,\omega)}(B)\cdot
\delta_{\rho_{\omega}^{i,N}}(R)\cdot\delta_{W^{i}(\cdot,\omega)}(D),
\end{equation}
$B\times R\times D\in\mathcal{B}(\mathcal{Z})$, $\omega\in\Omega$,
where $\bar{X}^{i,N}$ is the solution of \eqref
{EqPrelimitControlSDE} under $u^{N} = (u_{1}^{N},\ldots,u_{N}^{N})$,
and $\rho_{\omega}^{i,N}$ is the relaxed control induced by
$u_{i}^{N}(\cdot,\omega)$ according to~\eqref{ExControlRelaxation}. Notice
that $\rho_{\omega}^{i,N} \in\mathcal{R}_{1}$. The functional
occupation measures $Q^{N}$, $N\in\mathbb{N}$, just defined are
related to the Laplace principle by the fact that
\begin{eqnarray} \label{EqFOMeasureLaplace}
&& \frac{1}{2}\Mean\Biggl[ \frac{1}{N}\sum_{i=1}^{N} \int
_{0}^{T}|u_{i}^{N}(t)|^{2}\,dt\Biggr]  +  \Mean[ F(\bar{\mu
}^{N})] \nonumber\\
&&\qquad= \int_{\Omega}\biggl[ \int_{\mathcal{R}_{1}}\biggl( \frac{1}{2}\int
_{\mathbb{R}^{d_{1}}\times[0,T]} |y|^{2}  r(dy \times dt)\biggr)
Q_{\omega,\mathcal{R}}^{N}(dr)\\
&&\hspace*{193.1pt}{} + F(Q_{\omega,\mathcal{X}}^{N})\biggr]
\Prb(d\omega),
\nonumber
\end{eqnarray}
where $Q_{\omega,\mathcal{X}}^{N}$, $Q_{\omega,\mathcal{R}}^{N}$ denote
the first and second marginal of $Q_{\omega}^{N}\in\mathcal{P}(\mathcal
{Z})$, respectively, and we recall that $\mathcal{Z} = \mathcal
{X}\times\mathcal{R}_{1}\times\mathcal{W}$.

Thanks to assumption \hyperlink{ATightness}{(A5)} and the bound \eqref
{ExControlBound}, the first marginals of $(Q^{N})_{N\in\mathbb{N}}$ are
tight as random measures. The next lemma states that tightness of
$(Q^{N})_{N\in\mathbb{N}}$ as random measures follows. Thus we are
asserting tightness of the measures $\gamma^{N}\in\mathcal{P}(\mathcal
{P}(\mathcal{Z}))$ defined by $\gamma^{N}(A)=\Prb(Q^{N}\in A)$, $A \in
\mathcal{B}(\mathcal{P}(\mathcal{Z}))$.
\begin{lemma} \label{LemmaTightness}
The family $(Q^{N})_{N\in\mathbb{N}}$ of $\mathcal{P}(\mathcal
{Z})$-valued random variables is tight.
\end{lemma}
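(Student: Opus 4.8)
The plan is to show tightness of $(Q^N)_{N\in\mathbb{N}}$ by reducing it, via a standard criterion for tightness of random probability measures, to tightness of each of the three marginal families, and then to exploit the product structure of $Q^N_\omega$ to handle the $\mathcal{R}_1$- and $\mathcal{W}$-components directly. Recall that a family of $\mathcal{P}(\mathcal{S})$-valued random variables $(Q^N)$ (with $\mathcal{S}$ Polish) is tight if and only if the associated ``mean measures'' $\Mean[Q^N(\cdot)]\in\mathcal{P}(\mathcal{S})$ form a tight family in $\mathcal{P}(\mathcal{S})$; equivalently, one exhibits for each $\epsilon>0$ a compact $K\subset\mathcal{S}$ with $\sup_N \Mean[Q^N(K^c)]<\epsilon$. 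Since $\mathcal{Z}=\mathcal{X}\times\mathcal{R}_1\times\mathcal{W}$ and a product of compacts is compact, it suffices to prove this marginal-wise: that the mean measures of $Q^N_{\mathcal{X}}$, $Q^N_{\mathcal{R}}$, and $Q^N_{\mathcal{W}}$ are each tight in $\mathcal{P}(\mathcal{X})$, $\mathcal{P}(\mathcal{R}_1)$, $\mathcal{P}(\mathcal{W})$ respectively.

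First I would dispose of the $\mathcal{X}$-marginal: by hypothesis (the sentence preceding the lemma), Assumption~(A\ref{ATightness}) together with the bound \eqref{ExControlBound} gives that $(Q^N_{\mathcal{X}})_N$ is tight as a family of $\mathcal{P}(\mathcal{X})$-valued random variables, which is exactly what is needed. For the $\mathcal{W}$-marginal, observe from \eqref{ExFOMeasure} that $\Mean[Q^N_{\mathcal{W}}(D)] = \frac1N\sum_{i=1}^N \Prb(W^i(\cdot)\in D) = \Law(W^1)(D)$, since the $W^i$ are i.i.d.\ standard Wiener processes; thus the mean measure is the (single, fixed) Wiener measure on $\mathcal{W}$, which is trivially tight. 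For the $\mathcal{R}_1$-marginal, the mean measure is $\Mean[Q^N_{\mathcal{R}}(R)] = \frac1N\sum_{i=1}^N \Prb(\rho^{i,N}\in R)$; to show tightness in $\mathcal{P}(\mathcal{R}_1)$ I would invoke the characterization of relatively compact sets in $\mathcal{R}_1$ (weak convergence plus convergence of first moments, cf.\ the topology described in Section~\ref{SectModel}). Concretely, the set $\mathcal{R}$ itself is relatively compact for the weak topology (all elements have the same total mass $T$ on $\mathbb{R}^{d_1}\times[0,T]$ and the time marginal is fixed), and a uniform bound on second moments forces uniform integrability of the $|y|$-functional, hence relative compactness in $\mathcal{R}_1$. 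The requisite uniform second-moment bound is precisely \eqref{ExControlBound}: using \eqref{ExControlRelaxation}, $\int_{\mathbb{R}^{d_1}\times[0,T]}|y|^2\,\rho^{i,N}_\omega(dy\times dt) = \int_0^T |u^N_i(t,\omega)|^2\,dt$, so
\begin{equation*}
	\sup_N \Mean\Bigl[\int_{\mathcal{R}_1}\int_{\mathbb{R}^{d_1}\times[0,T]}|y|^2\, r(dy\times dt)\, Q^N_{\omega,\mathcal{R}}(dr)\Bigr] = \sup_N \Mean\Bigl[\frac1N\sum_{i=1}^N\int_0^T|u^N_i(t)|^2\,dt\Bigr] \leq 2\|F\| < \infty.
\end{equation*}
A Markov/Chebyshev argument on this bound then produces, for each $\epsilon$, a set of relaxed controls with uniformly second-moment below a threshold carrying mean mass at least $1-\epsilon$; intersecting with the weakly-compact set $\{r\in\mathcal{R}\}$ and taking closure in $\mathcal{R}_1$ yields the desired compact set.

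Having established tightness of all three mean marginals, I would conclude as follows: given $\epsilon>0$, pick compacts $K_{\mathcal{X}}\subset\mathcal{X}$, $K_{\mathcal{R}}\subset\mathcal{R}_1$, $K_{\mathcal{W}}\subset\mathcal{W}$ with $\sup_N\Mean[Q^N_{\mathcal{X}}(K_{\mathcal{X}}^c)]$, $\sup_N\Mean[Q^N_{\mathcal{R}}(K_{\mathcal{R}}^c)]$, $\sup_N\Mean[Q^N_{\mathcal{W}}(K_{\mathcal{W}}^c)]$ each $<\epsilon/3$; then $K\doteq K_{\mathcal{X}}\times K_{\mathcal{R}}\times K_{\mathcal{W}}$ is compact in $\mathcal{Z}$ and a union bound over the three marginal complements gives $\sup_N\Mean[Q^N(K^c)]<\epsilon$. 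By the mean-measure criterion, $(Q^N)_N$ is tight. The main obstacle, and the only step requiring genuine care, is the $\mathcal{R}_1$-marginal: one must be precise about the fact that relative compactness there demands not merely weak tightness but also uniform integrability of first moments, and that a uniform second-moment bound (not merely a first-moment bound) is what delivers this via de la Vall\'ee-Poussin; the other two marginals are essentially immediate from, respectively, the standing assumption and the i.i.d.\ structure of the driving noise.
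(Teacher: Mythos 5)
Your overall strategy --- reduce tightness of the random measures $(Q^{N})$ to tightness of the three mean marginal measures, handle the $\mathcal{X}$-marginal by Assumption~(A\ref{ATightness}) together with \eqref{ExControlBound}, the $\mathcal{W}$-marginal by the i.i.d.\ Wiener structure, and the $\mathcal{R}_{1}$-marginal through a Markov argument on the uniform second-moment bound --- is sound, and it is essentially the paper's proof recast in the language of mean measures instead of tightness functions. But one step is genuinely false: the claim that ``the set $\mathcal{R}$ itself is relatively compact for the weak topology.'' It is not; the paper explicitly notes that $\mathcal{R}$ is Polish but \emph{not} compact. Fixing the total mass ($=T$) and the time marginal (Lebesgue) does not prevent mass from escaping to infinity in the $y$-direction: for instance $r_{n}(dy\times dt)=\delta_{ne_{1}}(dy)\,dt$ lies in $\mathcal{R}$ (indeed in $\mathcal{R}_{1}$) and has no weakly convergent subsequence. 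Since $\mathbb{R}^{d_{1}}$ is noncompact, relative compactness in $\mathcal{R}$ requires a tightness estimate in the $y$-variable, and your write-up supplies none: the Markov/Chebyshev step you invoke acts at the level of the mean measure on $\mathcal{R}_{1}$ (to locate a sub-level set of large mass), not \emph{within} that sub-level set. As written, the compactness of your candidate sets $K_{\mathcal{R}}$ therefore rests on the false claim.

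The repair is immediate and uses only the bound you already have in hand: on the sub-level set $R_{c}=\{r\in\mathcal{R}_{1}:\int_{\mathbb{R}^{d_{1}}\times[0,T]}|y|^{2}\,r(dy\times dt)\leq c\}$, Chebyshev gives $\sup_{r\in R_{c}}r(\{|y|>M\}\times[0,T])\leq c/M^{2}$, which is exactly the missing $y$-tightness (this is the estimate in the paper's proof). Combined with your uniform integrability point --- the second-moment bound forces uniform integrability of $|y|$, hence convergence of first moments along weakly convergent subsequences, and closedness of $R_{c}$ via Fatou --- this makes $R_{c}$ compact in $\mathcal{R}_{1}$, after which your mean-measure and product-compact argument goes through. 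The paper reaches the same conclusion by declaring $g(r)=\int|y|^{2}\,r(dy\times dt)$ a tightness function on $\mathcal{R}_{1}$ and applying the tightness-function theorem (Dupuis--Ellis, Theorem~A.3.17) to $G(\Theta)=\int g\,d\Theta_{\mathcal{R}}$ together with $\sup_{N}\Mean[G(Q^{N})]<\infty$; your mean-measure criterion is an equivalent packaging, so apart from the gap above the two proofs coincide in substance.
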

\begin{pf}
The first marginals of $(Q^{N})_{N\in\mathbb{N}}$ are tight by
assumption \hyperlink{ATightness}{(A5)} and~\eqref{ExControlBound}. Since the
third marginals are obviously tight, we need only prove tightness of
the second marginals. Observe that
\[
g(r)\doteq\int_{\mathbb{R}^{d_{1}}\times[0,T]} |y|^{2} r(dy \times
 dt)
\]
is a tightness function on $\mathcal{R}_{1}$, that is, it is bounded
from below and has compact level sets. To verify the last property
take\vadjust{\goodbreak}
$c\in(0,\infty)$ and let $R_{c}\doteq\{r\in\mathcal{R}_{1}\dvtx g(r)\leq
c\}$. By Chebyshev's inequality, for all $M > 0$,
{\renewcommand{\theequation}{$\ast$}
\begin{equation} \label{LemmaTightnessBound}
\sup_{r\in R_{c}}r(\{y\in\mathbb{R}^{d_{1}}\dvtx |y|>M\}\times
[0,T])\leq\frac{c}{M^{2}}.
\end{equation}}

\vspace*{-3pt}

\noindent Hence $R_{c}$ is tight and thus relatively compact as a subset of
$\mathcal{R}$. Consequently, any sequence in $R_{c}$ has a weakly
convergent subsequence with limit in $\mathcal{R}$. Let $(r_{n})\subset
R_{c}$ be such that $(r_{n})$ converges weakly to $r_{\ast}$ for some
$r_{\ast}\in\mathcal{R}$. It remains to show that $r_{\ast}$ has
finite first moment and that the first moments of $(r_{n})$ converge to
that of $r_{\ast}$. By H\"{o}lder's inequality and a version of Fatou's
lemma (cf. Theorem~A.3.12 in \cite{dupuisellis97}, page 307),
\[
\sqrt{T\cdot c}\geq\liminf_{n\rightarrow\infty}\int_{\mathbb
{R}^{d_{1}}\times[0,T]} |y| r_{n}(dy \times dt)\geq\int
_{\mathbb{R}^{d_{1}}\times[0,T]} |y| r_{\ast}(dy \times dt).
\]
Let $M>0$. By \eqref{LemmaTightnessBound} and H\"{o}lder's inequality
we have for all $r\in R_{c}$,
\[
\int_{\{y\in\mathbb{R}^{d_{1}}\dvtx |y|>M\}\times[0,T]} |y| r(dy \times
 dt)\leq\frac{c}{M}.
\]
Therefore, using weak convergence,
\begin{eqnarray*}
\limsup_{n\rightarrow\infty}\int_{\mathbb{R}^{d_{1}}\times[0,T]}
|y| r_{n}(dy \times dt)& \leq & \frac{c}{M}+\int_{\{y\in\mathbb
{R}^{d_{1}}\dvtx|y|\leq M\}\times[0,T]} |y| r_{\ast}(dy \times dt) \\
& \leq & \frac{c}{M}+\int_{\mathbb{R}^{d_{1}}\times[0,T]} |y|
r_{\ast}(dy \times dt).
\end{eqnarray*}
Since $M>0$ may be arbitrarily big, it follows that
\[
\lim_{n\rightarrow\infty}\int_{\mathbb{R}^{d_{1}}\times[0,T]} |y|
r_{n}(dy \times dt) = \int_{\mathbb{R}^{d_{1}}\times[0,T]} |y|
r_{\ast}(dy \times dt).
\]

We conclude that $g$ is a tightness function on $\mathcal{R}_{1}$. Now
define a function $G \dvtx\mathcal{P}(\mathcal{Z})\rightarrow[0,\infty]$ by
\[
G(\Theta)\doteq\int_{\mathcal{Z}}g(r) \Theta(d\varphi \times dr
\times dw).
\]
Then $G$ is a tightness function on second marginals in $\mathcal
{P}(\mathcal{Z})$ (see Theorem~A.3.17 in \cite{dupuisellis97}, page 309). Thus in order to prove tightness of the second
marginals of $(Q^{N})_{N\in\mathbb{N}}$ (as random measures) it is
enough to show that
\[
\sup_{N\in\mathbb{N}}  \Mean[ G(Q^{N})] < \infty.
\]
However, this follows directly from \eqref{ExControlBound}.
\end{pf}

In the next lemma we identify the limit points of $(Q^{N})$ as being
weak solutions of \eqref{EqLimitControlSDE} with probability
one. The proof is similar in spirit to that of Theorem 5.3.1 in \cite{kushner90}, page 102.
\begin{lemma} \label{LemmaLimitMeasure}
Let $(Q^{N_{j}})_{j\in\mathbb{N}}$ be a weakly convergent subsequence
of $(Q^{N})_{N\in\mathbb{N}}$. Let $Q$ be a $\mathcal{P}(\mathcal
{Z})$-valued random variable defined on some\vadjust{\goodbreak} probability space $(\tilde
{\Omega},\tilde{\mathcal{F}},\tilde{\Prb})$ such that $Q^{N_{j}}
\stackrel{j\to\infty}{\longrightarrow} Q$ in distribution.\vspace*{1pt} Then
$Q_{\omega}$ corresponds to a~weak solution of \eqref
{EqLimitControlSDE} for $\tilde{\Prb}$-almost all $\omega\in\tilde
{\Omega}$.
\end{lemma}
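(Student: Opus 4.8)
The plan is to show that $Q_\omega$ satisfies, for $\tilde{\Prb}$-almost all $\omega$, the local martingale characterization of Lemma~\ref{LemmaMartingaleProblem}, together with the constraints defining $\mathcal{P}_\infty$ (finite second moment of the relaxed control, correct initial distribution, and the property $Q_\omega(\{w(0)=0\})=1$). The last property and $\nu_{Q_\omega}(0)=\nu_0$ are straightforward: $W^i(0)=0$ identically, so the identity $Q^N(\{w(0)=0\})=1$ holds for every $N$ and passes to the limit, while $\nu_{Q^N}(0)=\frac1N\sum_i\delta_{x^{i,N}}\to\nu_0$ by Assumption~(A\ref{AInitialCondition}) and this is a deterministic statement. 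Finite second moment of the relaxed control for the limit follows from the uniform bound \eqref{ExControlBound} combined with the fact (established in Lemma~\ref{LemmaTightness}) that $G$ is a tightness function, hence lower semicontinuous, so $\Mean_{\tilde{\Prb}}[G(Q)]\le\liminf_j\Mean[G(Q^{N_j})]\le 2\|F\|<\infty$, which in particular forces $G(Q_\omega)<\infty$ for almost every $\omega$. So the heart of the matter is the local martingale property.

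First I would fix a test function $f$ among the finitely many first- and second-order monomials listed in Lemma~\ref{LemmaMartingaleProblem}, fix times $0\le s_1\le\cdots\le s_p\le s< t\le T$ and bounded continuous functions $h_1,\dots,h_p$ of the appropriate coordinate-process increments, and fix a localizing cutoff (e.g. stop the state path when $\|\bar X\|$ exceeds a level $K$). The goal is to prove that the map
\[
\Psi(\Theta)\doteq \Mean_\Theta\!\left[\Bigl(M_f^\Theta(t\wedge\tau_K)-M_f^\Theta(s\wedge\tau_K)\Bigr)\prod_{q=1}^p h_q\right]
\]
vanishes $\tilde{\Prb}$-almost surely at $\Theta=Q_\omega$. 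The strategy is (i) show $\Psi(Q^{N_j})\to 0$ in the appropriate sense and (ii) show $\Theta\mapsto\Psi(\Theta)$ is continuous (or at least that $\Psi(Q^{N_j})\to\Psi(Q)$ in distribution along the convergent subsequence), so that the limit random variable $\Psi(Q)$ is zero almost surely; since there are only countably many choices of $f$, rational times, and a countable determining family of $h_q$'s, a single null set works.

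For step (i): for each fixed $N$, $Q^N_\omega$ is the empirical measure of the $N$ i.i.d.-driven controlled particles, so $M_f^{Q^N_\omega}$ evaluated on particle $i$ is, up to the replacement of $\nu_{Q^N_\omega}(s)$ by $\bar\mu^N(s)$ in the generator, exactly the Itô-martingale increment for $\bar X^{i,N}$ against $W^i$; but $\nu_{Q^N_\omega}(s)=\bar\mu^N(s)$ by construction, so each particle's term is a genuine $(\mathcal F_t)$-martingale increment. Averaging over $i$ and taking $\Mean[\,\cdot\,|\text{past}]$ kills the conditional expectation, leaving only the discrepancy between $M_f^{Q^N_\omega}$ built with $\nu_{Q^N_\omega}$ (a fixed measure flow) versus the one where the generator sees the running empirical measure — but these coincide, so in fact $\Mean[\Psi(Q^N)]=0$ exactly for every $N$, not merely asymptotically. (One should be slightly careful that the relaxed-control representation $r_s(dy)ds$ correctly reproduces the $\sigma(\bar X^{i,N})u_i^N\,dt$ drift; this is just \eqref{ExControlRelaxation}.) The only genuine limit is taken in step (ii).

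For step (ii), the continuity/convergence of $\Psi$ along $Q^{N_j}$: the functions $h_q$ are bounded continuous on $\mathcal{Z}$, so no trouble there; the term $f(\phi(t),w(t))-f(\phi(s),w(s))$ is bounded continuous on $\mathcal{Z}$ (after the $\tau_K$-truncation and because $f$ is a polynomial, polynomial growth is tamed), and the drift term $\int_s^t\!\int \mathcal A^\Theta_u(f)(\phi(u),y,w(u))\,r(dy\,du)$ is the place where the $\mathcal{R}_1$-topology earns its keep: $\mathcal A^\Theta_u(f)$ contains a term linear in $y$ (from $\sigma(x,\nu_\Theta(u))y\cdot\nabla_x f$), so only convergence in $\mathcal{R}_1$ — weak convergence plus convergence of first moments — guarantees that these integrals pass to the limit; the remaining terms in $\mathcal A$ are $y$-independent and handled by ordinary weak convergence together with the continuity of $b,\sigma$ (Assumption~(A\ref{ACoeffContinuity})) and the continuity of $\Theta\mapsto\nu_\Theta(\cdot)$ established at the start of Section~\ref{SectAuxiliary}, uniformly in $u\in[0,T]$ because $\{\nu_\Theta(u):u\in[0,T]\}$ is compact. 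The subtlety is that $\nu_\Theta(u)$ itself depends on $\Theta$, so the integrand depends on the varying measure $Q^{N_j}$ both through the path $\phi$ being integrated and through the coefficient $\nu_{Q^{N_j}}(u)$; I would handle this by first replacing $\nu_{Q^{N_j}}(u)$ by $\nu_{Q_\omega}(u)$ using uniform continuity of $b,\sigma$ on the relevant compact set and the uniform-in-$u$ convergence $\nu_{Q^{N_j}}(u)\to\nu_{Q_\omega}(u)$ (which follows from weak convergence of $Q^{N_j}\to Q$ together with the equicontinuity of the family $\{\nu_\Theta\}$ in $\Theta$), and only then passing to the limit in the resulting genuinely continuous functional. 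Finally, since $\Mean[\Psi(Q^{N_j})]=0$ and $\Psi(Q^{N_j})\to\Psi(Q)$ in distribution with uniformly bounded (by the $\tau_K$-truncation and \eqref{ExControlBound}) and uniformly integrable random variables, one gets $\Mean_{\tilde{\Prb}}[|\Psi(Q)|]=0$, wait — more precisely $\Mean_{\tilde{\Prb}}[\Psi(Q)]=0$ for every admissible choice of conditioning functionals $h_q$, which is exactly the statement that $M_f^{Q_\omega}$ is a martingale up to $\tau_K$ for $\tilde{\Prb}$-a.e.\ $\omega$; letting $K\to\infty$ along integers gives the local martingale property, and intersecting the countably many null sets (over $f$, $K$, rational times, and a countable convergence-determining class of $h_q$) completes the proof via Lemma~\ref{LemmaMartingaleProblem}.

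I expect the main obstacle to be the joint passage to the limit in the drift term described in the previous paragraph — disentangling the dependence of the integrand on the varying measure $Q^{N_j}$ through \emph{both} the integrating path and the coefficient $\nu_{Q^{N_j}}$, and verifying that the linear-in-$y$ piece is controlled precisely (and only) by $\mathcal{R}_1$-convergence rather than ordinary weak convergence. Everything else is a fairly mechanical combination of the martingale identity (exact at finite $N$), the tightness already in hand from Lemma~\ref{LemmaTightness}, lower semicontinuity of $G$, and the continuity of $\nu_\Theta$ in $\Theta$.
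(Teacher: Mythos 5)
There is a genuine gap at the very last step, and it is the step that carries the whole weight of the lemma. You correctly observe that $\Mean[\Psi(Q^{N_j})]=0$ exactly for each $j$ (each particle's term is a stochastic-integral increment against $W^i$, multiplied by an $\mathcal{F}_{t_0}$-measurable factor), and that with continuity of $\Theta\mapsto\Psi(\Theta)$ one gets $\Psi(Q^{N_j})\to\Psi(Q)$ in distribution, hence (with uniform integrability) $\Mean_{\tilde{\Prb}}[\Psi(Q)]=0$. But the conclusion you then draw --- that this ``is exactly the statement that $M_f^{Q_\omega}$ is a martingale up to the stopping time for $\tilde{\Prb}$-a.e.\ $\omega$'' --- is false. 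The martingale property for $\tilde{\Prb}$-a.e.\ $\omega$ is the statement $\Psi(Q_\omega)=0$ for a.e.\ $\omega$, i.e.\ $\Psi(Q)=0$ $\tilde{\Prb}$-almost surely; knowing only that the $\tilde{\Prb}$-average of the (signed) random variable $\Psi(Q)$ vanishes, for every choice of conditioning functionals $h_q$ (which are functions of the coordinate process, not of $\omega$), does not force $\Psi(Q)$ itself to vanish a.s. The missing ingredient is a law-of-large-numbers effect across particles: one must show $\Psi(Q^{N_j})\to 0$ \emph{in probability}, not merely in mean, so that the distributional limit $\Psi(Q)$ is the constant $0$. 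This is exactly what the paper supplies: writing $\Psi(Q^N_\omega)$ via It\^{o}'s formula as $\frac1N\sum_{i=1}^N$ (bounded stochastic integrals against the independent $W^i$), the It\^{o} isometry kills the cross terms and yields $\Mean\bigl[\Psi(Q^N)^2\bigr]=O(1/N)\to 0$. Without this second-moment (or some equivalent in-probability) estimate your argument only controls $\Mean_{\tilde{\Prb}}[\Psi(Q)]$, which is strictly weaker than the lemma.

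A secondary, fixable problem is your choice of localization. Stopping only when $\Vert\bar X\Vert$ exceeds $K$ does not bound the integrand of $\Psi$: the second-order monomials in $z$ make $f(\phi(t),w(t))$ unbounded in the Wiener coordinate, and the drift term of $M^\Theta_f$ contains $\int\sigma(\cdot)y\,r_s(dy)\,ds$, which is linear in $y$ and unbounded over $\mathcal{R}_1$; boundedness is what you invoke for the mapping theorem and for uniform integrability in the limit passage. Moreover the hitting time of a fixed level $K$ need not be a continuous functional of the path under the limit law. The paper's construction addresses both points at once: the stopping rule is based on $v(z,t)=\int_{\mathbb{R}^{d_1}\times[0,t]}|y|\,r(dy\!\times\!ds)+\sup_{s\le t}|\phi(s)|+\sup_{s\le t}|w(s)|$, so the stopped integrand is bounded, and the threshold is randomized over $a\in[0,1]$ (on the extension $\mathcal{Z}\times[0,1]$) so that the stopping time is a.s.\ continuous under $\Theta\times\lambda$, with Appendix~\ref{AppLocalMartingales} transferring the localized martingale property back to $(\mathcal{G}_t)$ and $\Theta$. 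Your identification of the role of the $\mathcal{R}_1$-topology for the linear-in-$y$ term, and of the compactness of $\{\nu_\Theta(t)\}$ for the coefficient replacement, matches the paper and is fine; the proof stands or falls on the two items above, chiefly the first.
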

\begin{pf}
Set $I \doteq\{N_{j}, j\in\mathbb{N}\}$, and write $(Q^{n})_{n\in I}$
for $(Q^{N_{j}})_{j\in\mathbb{N}}$. By hypothesis, $Q^{n} \rightarrow
Q$ in distribution.

Recall from Lemma \ref{LemmaMartingaleProblem} in Section \ref
{SectAuxiliary} that a probability measure $\Theta\in\mathcal
{P}(\mathcal{Z})$ with $\Theta(\{(\varphi,r,w)\in\mathcal{Z} \dvtx w(0)=0\})
= 1$ corresponds to a weak solution of \eqref
{EqLimitControlSDE} if (and only if), for all $f \in\mathbf
{C}^{2}(\mathbb{R} ^{d}\times\mathbb{R}^{d_{1}})$, $M^{\Theta}_{f}$ is
a local martingale under $\Theta$ with respect to the canonical
filtration $(\mathcal{G}_{t})$, where $M^{\Theta}_{f}$ is defined by
\eqref{ExLimitMartingale}. Moreover, the local martingale property has
to be checked only for those~$M^{\Theta}_{f}$ where the test function
$f$ is a monomial of first or second order.

In verifying the local martingale property of $M^{\Theta}_{f}$ when
$\Theta= Q_{\omega}$ for some \mbox{$\omega\in\tilde{\Omega}$}, we will
work with randomized stopping times. Those stopping times live on an
extension $(\hat{\mathcal{Z}},\mathcal{B}(\hat{\mathcal{Z}}))$ of the
measurable space $(\mathcal{Z},\mathcal{B}(\mathcal{Z}))$ and are
adapted to a filtration $(\hat{\mathcal{G}}_{t})$ in $\mathcal{B}(\hat
{\mathcal{Z}})$, where
\[
\hat{\mathcal{Z}}\doteq\mathcal{Z}\times[0,1],\qquad \hat{\mathcal
{G}}_{t}\doteq\mathcal{G}_{t} \times\mathcal{B}([0,1]),\qquad  t \in[0,T],
\]
and $(\mathcal{G}_{t})$ is the canonical filtration in $\mathcal
{B}(\mathcal{Z})$. Any random object defined on $(\mathcal{Z},\mathcal
{B}(\mathcal{Z}))$ also lives on $(\hat{\mathcal{Z}},\mathcal{B}(\hat
{\mathcal{Z}}))$, and no notational distinction will be made.

Let $\lambda$ denote the uniform distribution on $\mathcal{B}([0,1])$.
Any probability measure $\Theta$ on $\mathcal{B}(\mathcal{Z})$ induces
a probability measure on $\mathcal{B}(\hat{\mathcal{Z}})$ given by $\hat
{\Theta}\doteq\Theta\times\lambda$. For each $k\in\mathbb{N}$,
define a stopping time $\tau_{k}$ on $(\hat{\mathcal{Z}},\mathcal
{B}(\hat{\mathcal{Z}}))$ with respect to the filtration $(\hat{\mathcal
{G}}_{t})$ by setting, for $(z,a)\in\mathcal{Z}\times[0,1]$,
\[
\tau_{k}(z,a)\doteq\inf\{ t\in[0,T]\dvtx v(z,t)\geq k+a\},
\]
where
\[
v((\varphi,r,w),t)\doteq\int_{\mathbb{R}^{d_{1}}\times[0,t]}
|y| r(dy \times ds)+\sup_{s\in[0,t]}|\varphi(s)|+\sup_{s\in[0,t]}|w(s)|.
\]
Note that the mapping $t\mapsto v((\varphi,r,w),t)$ is monotonic for all
$(\varphi,r,w)\in\mathcal{Z}$. Hence the stopping times have the
following properties. The boundedness of $\varphi$ and $w$ (being
continuous functions on a compact interval) and the boundedness of $\int
_{\mathbb{R}^{d_{1}}\times[0,T]} |y| r(dy \times ds)$ imply that
$\tau_{k}\nearrow T$ as $k\rightarrow\infty$ with probability one
under~$\hat{\Theta}$. The second property of note is that the mapping
\[
\mathcal{Z}\times[0,1]\ni(z,a)\quad\mapsto\quad\tau_{k}(z,a)\in[0,T]
\]
is continuous with probability one under $\hat{\Theta}$. To see this,
note that for every $z\in\mathcal{Z}$ the set
\[
A_{z}\doteq\{ c\in\mathbb{R}_{+}\dvtx v(z,s)=c\mbox{ for all }s\in
[t,t + \delta]\mbox{, some }t\in[0,T],\mbox{ some }\delta>0\}
\]
is at most countable. However, $\hat{z}\mapsto\tau_{k}(\hat{z})$
fails to be continuous at $(z,a)$ only when $k+a\in A_{z}$. Therefore,
by Fubini's theorem,
\begin{eqnarray*}
&&\hat{\Theta}\bigl(\{(z,a)\in\hat{Z}\dvtx\tau_{k}\mbox{ discontinuous at
}(z,a)\}\bigr)\\
&&\qquad = \int_{\hat{\mathcal{Z}}}\mathbf{1}_{A_{z}}(k + a)\hat
{\Theta}(dz \times da) \\
&&\qquad = \int_{\mathcal{Z}}\int_{[0,1]} \mathbf{1}_{A_{z}}(k +
a)\lambda(da)\Theta(dz) \\
&&\qquad = 0.
\end{eqnarray*}

Notice that if $M_{f}^{\Theta}$ is a local martingale with respect to
$(\hat{\mathcal{G}}_{t})$ under $\hat{\Theta}=\Theta\times\lambda$
with localizing sequence of stopping times $(\tau_{k})_{k\in\mathbb
{N}}$, then $M_{f}^{\Theta}$ is also a local martingale with respect to
$(\mathcal{G}_{t})$ under $\Theta$ with localizing sequence of stopping
times $(\tau_{k}(\cdot,0))_{k\in\mathbb{N}}$; see \hyperref[AppLocalMartingales]{Appendix}.
Thus it suffices to prove the martingale
property of $M_{f}^{\Theta}$ up till time $\tau_{k}$ with respect to
filtration $(\hat{\mathcal{G}}_{t})$ and probability measure $\hat
{\Theta}$.

Clearly, the process $M_{f}^{\Theta}(\cdot\wedge\tau_{k})$ is a $(\hat
{\mathcal{G}}_{t})$-martingale under $\hat{\Theta}$ if and only~if
\setcounter{equation}{3}
\begin{equation} \label{EqLemmaLimitMartingale}
\Mean_{\Theta\times\lambda}\bigl[ \Psi\cdot\bigl(M_{f}^{\Theta
}(t_{1}\wedge\tau_{k})-M_{f}^{\Theta}(t_{0}\wedge\tau_{k})
\bigr)\bigr] = 0
\end{equation}
for all $t_{0},t_{1}\in[0,T]$ with $t_{0}\leq t_{1}$, and $\hat
{\mathcal{G}}_{t_{0}}$-measurable $\Psi\in\mathbf{C}_{b}(\hat{\mathcal{Z}})$.

To verify the martingale property of $M_{f}^{\Theta}(\cdot\wedge\tau
_{k})$ it is enough to check that \eqref{EqLemmaLimitMartingale} holds
for any countable collection of times $t_{0}$, $t_{1}$ which is dense
in $[0,T]$ and any countable collection of functions $\Psi\in\mathbf
{C}_{b}(\hat{\mathcal{Z}})$ that generates the (countably many) $\sigma
$-algebras $\hat{\mathcal{G}}_{t_{0}}$. Recall that the collection of
test functions~$f$ for which a martingale property must be verified
consists of just monomials of degree one or two, and hence is finite.
Thus, there is a countable collection $\mathcal{T}\subset\mathbb
{N}\times[0,T]^{2}\times\mathbf{C}_{b}(\hat{\mathcal{Z}})\times
\mathbf{C}^{2}(\mathbb{R}^{d} \times \mathbb{R}^{d_{1}})$ of test
parameters such that if \eqref{EqLemmaLimitMartingale} holds for all
$(k,t_{0},t_{1},\Psi,f)\in\mathcal{T}$, then $\Theta$ corresponds to
a~weak solution of~\eqref{EqLimitControlSDE}.

Let $(k,t_{0},t_{1},\Psi,f)\in\mathcal{T}$. Define a mapping $\Phi=
\Phi_{(k,t_{0},t_{1},\Psi,f)}$ by
\[
\mathcal{P}(\mathcal{Z})\ni\Theta\quad\mapsto\quad\Phi(\Theta)\doteq\Mean
_{\Theta\times\lambda}\bigl[ \Psi\cdot\bigl(M_{f}^{\Theta
}(t_{1}\wedge\tau_{k})-M_{f}^{\Theta}(t_{0}\wedge\tau_{k})
\bigr)\bigr].
\]
We claim that the mapping $\Phi$ is continuous in the topology of weak
convergence on $\mathcal{P}(\mathcal{Z})$. To check this, take $\Theta
\in\mathcal{P}(\mathcal{Z})$ and any sequence $(\Theta_{l})_{l\in
\mathbb{N}}\subset\mathcal{P}(\mathcal{Z})$ that converges to $\Theta
$. Recall the definitions \eqref{ExLimitMartingale} and \eqref
{ExLimitGenerator}. As a consequence of assumption \hyperlink{ACoeffContinuity}{(A2)}
and by construction of the stopping time $\tau
_{k}$, the integrand in \eqref{EqLemmaLimitMartingale} is bounded;
thanks to assumption \hyperlink{ACoeffContinuity}{(A2)} and the almost sure
continuity of $\tau_{k}$, it is continuous with probability one under
$\hat{\Theta}\doteq\Theta\times\lambda$. By weak convergence and the
mapping theorem \cite{billingsley99}, page 21, it follows that
\begin{eqnarray} \label{EqLemmaLimitFirstPart}
&&\Mean_{\Theta_{l}\times\lambda}\bigl[ \Psi\cdot\bigl(M_{f}^{\Theta
}(t_{1}\wedge\tau_{k})-M_{f}^{\Theta}(t_{0}\wedge\tau_{k})
\bigr)\bigr] \nonumber\\[-8pt]\\[-8pt]
&&\qquad \stackrel{l\to\infty}{\longrightarrow}  \Mean_{\Theta\times
\lambda}\bigl[ \Psi\cdot\bigl(M_{f}^{\Theta}(t_{1}\wedge\tau
_{k})-M_{f}^{\Theta}(t_{0}\wedge\tau_{k})\bigr)\bigr].
\nonumber
\end{eqnarray}
Since the sequence $(\Theta_{l})$ converges to $\Theta$, the set $\{
\Theta_{l} \dvtx l\in\mathbb{N}\}\cup\{\Theta\}$ is compact in $\mathcal
{P}(\mathcal{Z})$. Recalling \eqref{ExQMarginal}, we find that the set
of probability measures $\{\nu_{\Theta_{l}}(t)\dvtx l\in\mathbb{N},t\in
[0,T]\}\cup\{\nu_{\Theta}(t)\dvtx t\in[0,T]\}$ has compact closure in
$\mathcal{P}(\mathbb{R}^{d})$. We claim that together with
assumption \hyperlink{ACoeffContinuity}{(A2)} and the construction of $\tau
_{k}$, this implies that
\[
\sup_{t\in[0,T],\hat{z}\in\hat{\mathcal{Z}}} \bigl|M_{f}^{\Theta
_{l}}\bigl(t\wedge\tau_{k}(\hat{z}),\hat{z}\bigr)-M_{f}^{\Theta}\bigl(t\wedge\tau
_{k}(\hat{z}),\hat{z}\bigr)\bigr| \stackrel{l\rightarrow\infty
}{\longrightarrow} 0.
\]
To see this, we consider, for example, the integral corresponding to
the first term in the drift, which is
\[
\int_{0}^{t\wedge\tau_{k}(\hat{z})}\langle b(\varphi(s),\nu_{\Theta
_{l}}(s)),\nabla_{x}f(\varphi(s),w(s))\rangle \,ds.
\]
By the assumed continuity properties of $b$ this converges uniformly in
$t\in[0,T],\hat{z}\in\hat{\mathcal{Z}}$ to
\[
\int_{0}^{t\wedge\tau_{k}(\hat{z})}\langle b(\varphi(s),\nu
_{\Theta}(s)),\nabla_{x}f(\varphi(s),w(s))\rangle \,ds,
\]
and a similar result holds for each of the other terms. Since $\Psi$ is
bounded, it follows that
\begin{eqnarray*}
&&\bigl| \Mean_{\Theta_{l}\times\lambda}\bigl[ \Psi\cdot
\bigl(M_{f}^{\Theta}(t_{1}\wedge\tau_{k})-M_{f}^{\Theta}(t_{0}\wedge
\tau_{k})\bigr)\bigr] \\
&&\qquad{} -\Mean_{\Theta_{l}\times\lambda}\bigl[ \Psi\cdot
\bigl(M_{f}^{\Theta_{l}}(t_{1}\wedge\tau_{k})-M_{f}^{\Theta
_{l}}(t_{0}\wedge\tau_{k})\bigr)\bigr] \bigr|  \stackrel
{l\rightarrow\infty}{\longrightarrow}  0.
\end{eqnarray*}
In combination with (\ref{EqLemmaLimitFirstPart}) this implies $\Phi
(\Theta_{l})\rightarrow\Phi(\Theta)$.

By hypothesis, the sequence $(Q^{n})_{n\in I}$ of $\mathcal{P}(\mathcal
{Z})$-valued random variables converges to $Q$ in distribution. Hence
the mapping theorem and the continuity of $\Phi$ imply that $\Phi
(Q^{n})\rightarrow\Phi(Q)$ in distribution.

Let $n\in I$. By construction of $Q^{n}$ and Fubini's theorem, for
$\omega\in\Omega$,
\begin{eqnarray*}
\Phi(Q_{\omega}^{n})& =&\Mean_{Q_{\omega}^{n}\times\lambda}
\bigl[\Psi\cdot\bigl(M_{f}^{Q_{\omega}^{n}}(t_{1}\wedge\tau_{k}) -
M_{f}^{Q_{\omega}^{n}}(t_{0}\wedge\tau_{k})\bigr)\bigr] \\
& =&
\frac{1}{n} \sum_{i=1}^{n} \int_{0}^{1}\Psi
\bigl((\bar{X}^{i,n}(\cdot,\omega),\rho^{i,n}_{\omega},W^{i}(\cdot,\omega)),a\bigr)
\\
&&\hspace*{38.7pt}{}\times\biggl(f\bigl(\bar{X}^{i,n}(t_{1}\wedge\bar{\tau}^{i,n}_{k},\omega
),W^{i}(t_{1}\wedge\bar{\tau}^{i,n}_{k},\omega)\bigr) \\
&&\hspace*{56.2pt}{} - f\bigl(\bar{X}^{i,n}(t_{0}\wedge\bar{\tau
}^{i,n}_{k},\omega),W^{i}(t_{0}\wedge\bar{\tau}^{i,n}_{k},\omega)
\bigr) \\
&&\hspace*{56.5pt}{} - \int_{t_{0}\wedge\bar{\tau
}^{i,n}_{k}}^{t_{1}\wedge\bar{\tau}^{i,n}_{k}}  \mathcal{A}^{\bar{\mu
}^{n}_{\omega}}_{s}(f)(\bar{X}^{i,n}(s,\omega),u^{n}_{i}(s,\omega
),\\
&&\hspace*{196.6pt}W^{i}(s,\omega))\,ds \biggr)\,da,
\end{eqnarray*}
where $\mathcal{A}^{\bar{\mu}_{\omega}^{n}}$ is defined according to
\eqref{ExLimitGenerator} with $\bar{\mu}_{\omega}^{n}$ in place of $\nu
_{\Theta}$, and $\bar{\tau}_{k}^{i,n}=\bar{\tau}_{k}^{i,n}(\omega,a)$
is defined like $\tau_{k}((\varphi,r,w),a)$ with $\varphi$ replaced by
$\bar{X}^{i,n}(\cdot,\omega)$, $r$ replaced by $\rho_{\omega}^{i,n}$,
the relaxed control corresponding to $u_{i}^{n}(\cdot,\omega)$, and $w$
replaced by $W^{i}(\cdot,\omega)$.

For all $a\in[0,1]$, by It\^{o}'s formula, it holds $\Prb
$-almost surely that
\begin{eqnarray*}
&&f\bigl(\bar{X}^{i,n}(t_{1}\wedge\bar{\tau
}_{k}^{i,n}),W^{i}(t_{1}\wedge\bar{\tau}_{k}^{i,n})\bigr)\\
&&\quad{} - f
\bigl(\bar{X}^{i,n}(t_{0}\wedge\bar{\tau}_{k}^{i,n}),W^{i}(t_{0}\wedge\bar
{\tau}_{k}^{i,n})\bigr) \\
&&\quad{}- \int_{t_{0}\wedge\bar{\tau}_{k}^{i,n}}^{t_{1}\wedge\bar{\tau
}_{k}^{i,n}} \mathcal{A}_{s}^{\bar{\mu}^{n}}(f)(\bar
{X}^{i,n}(s),u_{i}^{n}(s),W^{i}(s))\,ds \\
&&\qquad= \int_{t_{0}\wedge\bar{\tau
}_{k}^{i,n}}^{t_{1}\wedge\bar{\tau}_{k}^{i,n}} {\nabla_{x}f}^{\mathsf
{T}}(\bar{X}^{i,n}(s),W^{i}(s))\sigma(X^{i,n}(s),\bar{\mu
}^{n}(s))\,dW^{i}(s) \\
&&\qquad\quad{}+ \int_{t_{0}\wedge\bar{\tau}_{k}^{i,n}}^{t_{1}\wedge\bar{\tau
}_{k}^{i,n}} {\nabla_{z}f}^{\mathsf{T}}(%
\bar{X}^{i,n}(s),W^{i}(s))\,dW^{i}(s),
\end{eqnarray*}
where $\bar{\tau}_{k}^{i,n}=\bar{\tau}_{k}^{i,n}(\cdot,a)$ and $\bar{\tau
}_{k}^{i,n}$, $\bar{\mu}^{n}$, $\bar{X}^{i,n}$, $u_{i}^{n}$, are random
objects on $(\Omega,\mathcal{F})$.

By Fubini's theorem and Jensen's inequality, we have
\begin{eqnarray*}
&&\Mean[ \Phi(Q^{n})^{2}] \\
&&\qquad\leq\int_{0}^{1}\Mean\bigl[ \Mean_{Q_{\omega}^{n}}\bigl[ \Psi
(\cdot,a)\cdot\bigl(M_{f}^{Q_{\omega}^{n}}\bigl(t_{1}\wedge\tau_{k}(\cdot,a)\bigr)\\
&&\qquad\quad\hspace*{91pt}{} -
M_{f}^{Q_{\omega}^{n}}\bigl(t_{0}\wedge\tau_{k}(\cdot,a)\bigr)\bigr)
\bigr]^{2}\bigr]\, da.
\end{eqnarray*}
For all $a\in[0,1]$, by the It\^{o} isometry and because $\Psi
(\cdot,a)$ is $\mathcal{G}_{t_{0}}$-measurable, and $\tau_{k}(\cdot,a)$ is a
stopping time with respect to $(\mathcal{G}_{t})$, it holds that
\begin{eqnarray*}
&& \Mean\bigl[ \Mean_{Q_{\omega}^{n}}\bigl[ \Psi(\cdot,a)\cdot
\bigl(M_{f}^{Q_{\omega}^{n}}\bigl(t_{1}\wedge\tau_{k}(\cdot,a)\bigr) -
M_{f}^{Q_{\omega}^{n}}\bigl(t_{0}\wedge\tau_{k}(\cdot,a)\bigr)\bigr)
\bigr]^{2}\bigr] \\
&&\qquad  =
\Mean\bigl[\Mean_{Q_{\omega}^{n}}\bigl[\Psi
(\cdot,a)\cdot \mathbf{1}_{\{\tau_{k}(\cdot,a)\geq
t_{0}\}}\\
&&\qquad\quad\hspace*{7.7pt}{}\times\bigl(M_{f}^{Q_{\omega}^{n}}\bigl(t_{1}\wedge\tau_{k}(\cdot,a)
\bigr) - M_{f}^{Q_{\omega}^{n}}\bigl(t_{0}\wedge\tau_{k}(\cdot,a)\bigr)
\bigr)\bigr]^{2} \bigr]
\\
&&\qquad  =
\Mean\Biggl[ \Biggl( \frac{1}{n} \sum_{i=1}^{n} \int
_{t_{0}\wedge\bar{\tau}^{i,n}_{k}(\cdot,a)}^{t_{1}\wedge\bar{\tau
}^{i,n}_{k}(\cdot,a)}  \Psi(\cdot,a)\cdot\mathbf{1}_{\{\bar{\tau
}^{i,n}_{k}(\cdot,a)\geq t_{0}\}}\\
&&\qquad\quad\hspace*{95.7pt}{}\times\bigl({\nabla_{z}f}^\mathsf{T}
(\bar{X}^{i,n}(s),W^{i}(s)) \\
&&\qquad\hspace*{122.3pt}{}+{\nabla_{x}f}^\mathsf{T}(\bar{X}^{i,n}(s),W^{i}(s))\\
&&\qquad\quad\hspace*{131pt}{}\times\sigma
(X^{i,n}(s),\bar{\mu}^{n}(s))\bigr)\,dW^{i}(s) \Biggr)^{2} \Biggr]
\\
&&\qquad  =
\frac{1}{n^{2}} \sum_{i=1}^{n} \Mean\biggl[\int
_{t_{0}\wedge\bar{\tau}^{i,n}_{k}(\cdot,a)}^{t_{1}\wedge\bar{\tau
}^{i,n}_{k}(\cdot,a)}  \bigl|\Psi(\cdot,a)\cdot\mathbf{1}_{\{\bar{\tau
}^{i,n}_{k}(\cdot,a)\geq t_{0}\}}\\
&&\qquad\quad\hspace*{94.6pt}{}\times\bigl({\nabla_{z}f}^\mathsf{T}
(\bar{X}^{i,n}(s),W^{i}(s)) \\
&&\qquad\quad\hspace*{110pt}{}+ {\nabla_{x}f}^\mathsf{T}(\bar{X}^{i,n}(s),W^{i}(s))\\
&&\qquad\quad\hspace*{130.2pt}{}\times\sigma
(X^{i,n}(s),\bar{\mu}^{n}(s))\bigr)\bigr|^{2} \,ds \biggr]
\\
&&\qquad  \stackrel{n\rightarrow\infty}{\longrightarrow} 0.
\end{eqnarray*}

It follows that for each $(k,t_{0},t_{1},\Psi,f) \in\mathcal{T}$ there
is a set $Z_{(k,t_{0},t_{1},\Psi,f)} \in\tilde{\mathcal{F}}$ such that
$\tilde{\Prb}(Z_{(k,t_{0},t_{1},\Psi,f)}) = 0$ and
\[
\Phi_{(k,t_{0},t_{1},\Psi,f)}(Q_{\omega}) = 0 \qquad \mbox{for all }
\omega\in\tilde{\Omega} \setminus Z_{(k,t_{0},t_{1},\Psi,f)}.
\]
Let $Z$ be the union of all sets $Z_{(k,t_{0},t_{1},\Psi,f)}$,
$(k,t_{0},t_{1},\Psi,f) \in\mathcal{T}$. Since $\mathcal{T}$ is
countable, we have $Z \in\tilde{\mathcal{F}}$, $\tilde{\Prb}(Z) = 0$ and
\[
\Phi_{(k,t_{0},t_{1},\Psi,f)}(Q_{\omega}) = 0  \qquad\mbox{for all }
\omega\in\Omega\setminus Z,  (k,t_{0},t_{1},\Psi,f) \in\mathcal{T}.
\]
It follows that $Q_{\omega}$ corresponds to a weak solution of
\eqref{EqLimitControlSDE} for $\tilde{\Prb}$-almost all
$\omega\in\tilde{\Omega}$.
\end{pf}

The function $F$ in \eqref{EqVariationalLowerBound} is bounded and
continuous. The variational lower bound now follows from
\eqref{EqFOMeasureLaplace}, Lemmas \ref{LemmaTightness} and
\ref{LemmaLimitMeasure}, Fatou's lemma and the definition of~$I$.


\section{Variational upper bound}
\label{SectLPUpperBound}

Let $\Theta\in\mathcal{P}_{\infty}$. We will construct a sequence
$(u^{N})_{N\in\mathbb{N}}$ with $u^{N}\in\mathcal{U}_{N}$ on a common
stochastic basis such that (\ref{EqVariationalUpperBound}) holds
\begin{eqnarray*}
&& \limsup_{N\rightarrow\infty}\Biggl\{ \frac{1}{2}\Mean\Biggl[ \frac
{1}{N} \sum_{i=1}^{N}\int_{0}^{T}|u_{i}^{N}(t)|^{2}\,dt\Biggr] +  \Mean
[ F(\bar{\mu}^{N})] \Biggr\} \\
&&\qquad  \leq  \frac{1}{2}\int_{\mathcal{R}}\int_{\mathbb
{R}^{d_{1}}\times[0,T]} |y|^{2} r(dy \times dt)\Theta_{\mathcal
{R}}(dr)+F(\Theta_{\mathcal{X}}).
\end{eqnarray*}

Let $(\bar{X},\rho,W)$ be the canonical process on $\mathcal{Z}$ (cf.
end of Section \ref{SectModel}). Then $((\mathcal{Z},\mathcal
{B}(\mathcal{Z}),\Theta),(\tilde{\mathcal{G}}_{t+}^{\Theta}),(\bar
{X},\rho,W))$ is a weak solution of \eqref{EqLimitControlSDE}.
The filtration $(\tilde{\mathcal{G}}_{t+}^{\Theta})$ satisfies the
usual conditions, where $(\tilde{\mathcal{G}}_{t}^{\Theta})$ denotes
the $\Theta$-augmentation of the canonical filtration $(\mathcal
{G}_{t})$ (cf. Section \ref{SectAuxiliary}).

Since the relaxed control process $\rho$ appears linearly in
\eqref{EqLimitControlSDE}, it corresponds, as far as the
dynamics are concerned, to an ordinary $(\mathcal{G}_{t})$-adapted
process $u$, namely
\[
u(t,\omega)\doteq\int_{\mathbb{R}^{d_{1}}}y \rho_{\omega
,t}(dy), \qquad t\in[0,T], \omega\in\mathcal{Z},
\]
where $\rho_{\omega,t}$ is the derivative measure of $\rho_{\omega}$
at time $t$. For the associated costs, by Jensen's inequality,
\begin{eqnarray*}
\Mean\biggl[ \int_{0}^{T}|u(t)|^{2}\,dt\biggr] & = & \Mean\biggl[\int
_{0}^{T}\biggl\vert\int_{\mathbb{R}^{d_{1}}}y \rho
_{t}(dy)\biggr\vert^{2}\biggr] \\
& \leq & \Mean\biggl[ \int_{0}^{T}\int_{\mathbb{R}^{d_{1}}}|y|^{2}\rho
_{t}(dy)\biggr] \\
& = &\Mean\biggl[ \int_{\mathbb{R}^{d_{1}}\times[0,T]}|y|^{2}\rho(dy
\times dt)\biggr],
\end{eqnarray*}
whence $u$ performs at least as well as $\rho$. Let $\tilde{\rho}$ be
the relaxed control random variable corresponding to $u$ according to
\eqref{ExControlRelaxation}. In general, $\tilde{\rho}\neq\rho$.
However, since both $(\bar{X},\rho,W)$ and $(\bar{X},\tilde{\rho},W)$
are solutions of \eqref{EqLimitControlSDE} under $\Theta$ and
since the costs associated with $u$ and thus $\tilde{\rho}$ never
exceed the costs associated with $\rho$, we may and will assume that
$\rho=\tilde{\rho}$.

Define a probability space $(\Omega_{\infty},\mathcal{F}^{\infty},\Prb
_{\infty})$ together with a filtration $(\mathcal{F}_{t}^{\infty})$
as the countably infinite product of $(\mathcal{Z},\mathcal{B}(\mathcal
{Z}),\Theta)$ and $(\tilde{\mathcal{G}}_{t+}^{\Theta})$, respectively.
For a typical element of $\Omega_{\infty}$ let us write $\omega=
(\omega_{1},\omega_{2},\ldots)$. For $i\in\mathbb{N}$ define
\[
W^{i,\infty}(t,\omega)\doteq W(t,\omega_{i}),\qquad  u_{i}^{\infty
}(t,\omega_{i})\doteq u(t,\omega_{i}),\qquad \omega\in\Omega_{\infty
}, t\in[0,T].
\]
Let $\rho^{i,\infty}$ be the relaxed control random variable
corresponding to $u_{i}^{\infty}$. By construction, $(\rho^{i,\infty
},W^{i,\infty})$, $i\in\mathbb{N}$, are independent and identically
distributed with common distribution the same as that of $(\rho,W)$. In
particular, $W^{i,\infty}$, $i\in\mathbb{N}$, are independent
$d_{1}$-dimensional standard Wiener processes.

For $N \in\mathbb{N}$, let $\tilde{X}^{1,N},\ldots,\tilde{X}^{N,N}$ be
the solution to the system of \mbox{SDEs}
\begin{eqnarray*}
d\tilde{X}^{i,N}(t)& = & b(\tilde{X}^{i,N}(t),\tilde{\mu
}^{N}(t))\,dt + \sigma(\tilde{X}^{i,N}(t),\tilde{\mu
}^{N}(t))u^{\infty}_{i}(t)\,dt \\
&&{} + \sigma(\tilde{X}^{i,N}(t),\tilde{\mu}^{N}(t)
)\,dW^{i,\infty}(t),\qquad \tilde{X}^{i,N}(0) = x^{i,N},
\end{eqnarray*}
where $\tilde{\mu}^{N}(t)$ is the empirical measure of $\tilde
{X}^{1,N},\ldots,\tilde{X}^{N,N}$ at time $t$. Thus, $\tilde{X}^{i,N}$
solves \eqref{EqPrelimitControlSDE} with the same
deterministic initial condition as before, but on a different
stochastic basis.

For each $N\in\mathbb{N}$ define, in analogy with \eqref{ExFOMeasure},
a $\mathcal{P}(\mathcal{Z})$-valued random variable according to
\[
\tilde{Q}_{\omega}^{N}(B\times R\times D)\doteq  \frac{1}{N} \sum
_{i=1}^{N}\delta_{\tilde{X}^{i,N}(\cdot,\omega)}(B)\cdot\delta_{\rho
_{\omega}^{i,\infty}}(R)\cdot\delta_{W^{i,\infty}(\cdot,\omega)}(D),
\]
$B\times R\times D\in\mathcal{B}(\mathcal{Z})$, $\omega\in\Omega
_{\infty}$. In analogy with \eqref{EqFOMeasureLaplace} we have
\begin{eqnarray} \label{EqFOMeasureLaplace2}
&& \frac{1}{2}\Mean_{\infty}\Biggl[ \frac{1}{N}\sum_{i=1}^{N} \int
_{0}^{T}|u_{i}^{\infty}(t)|^{2}\,dt\Biggr] + \Mean_{\infty}[
F(\tilde{\mu}^{N})] \nonumber\\
&&\qquad = \int_{\Omega_{\infty}}\biggl[ \int_{\mathcal{R}_{1}}\biggl( \frac
{1}{2} \int_{\mathbb{R}^{d_{1}}\times[0,T]} |y|^{2}  r(dy \times
dt)\biggr) \tilde{Q}_{\omega,\mathcal{R}}^{N}(dr)\\
&&\qquad\quad\hspace*{167pt}{}+F(\tilde{Q}_{\omega
,\mathcal{X}}^{N})\biggr] \Prb_{\infty}(d\omega).
\nonumber
\end{eqnarray}
Since $(\tilde{\rho}^{i,\infty},W^{i,\infty})$, $i\in\mathbb{N}$, are
i.i.d., the second and third component of $(\tilde{Q}^{N})_{N\in\mathbb
{N}}$ are tight. Tightness of the first component is an immediate
consequence of assumption \hyperlink{ATightness}{(A5)}. Thus, $(\tilde
{Q}^{N})_{N\in\mathbb{N}}$ is tight as a family of $\mathcal
{P}(\mathcal{Z})$-valued random variables.

Let\vspace*{1pt} $\tilde{Q}$ be any limit point of $(\tilde{Q}^{N})_{N\in\mathbb
{N}}$ defined on some probability space $(\tilde{\Omega},\tilde{\mathcal
{F}},\tilde{\Prb})$. By Lemma \ref{LemmaLimitMeasure} and its proof, it
follows that, for $\tilde{\Prb}$-almost all $\omega\in\tilde{\Omega
}$, $\tilde{Q}_{\omega}$ corresponds to a weak solution of
\eqref{EqLimitControlSDE}. Moreover, since $(\rho^{i,\infty
},W^{i,\infty})$, $i\in\mathbb{N}$, are i.i.d. with common
distribution (under $\Prb_{\infty}$), the same as that of $(\rho,W)$
(under~$\Theta$), Varadarajan's theorem \cite{dudley02}, page 399,
implies that, for $\tilde{\Prb}$-almost all $\omega\in\tilde{\Omega}$,
\[
\tilde{Q}_{\omega|\mathcal{B}(\mathcal{R}_{1}\times\mathcal{W})} =
\Theta\circ(\rho,W)^{-1};
\]
that is, the joint distribution of the second and third component of
the canonical process on $\mathcal{Z}$ under a typical $\tilde
{Q}_{\omega}$ equals the joint distribution of the control and Wiener
process with which we started.

By assumption \hyperlink{ALimitSolution}{(A4)}, weak sense uniqueness holds for
\eqref{EqLimitControlSDE}. Therefore, for $\tilde{\Prb
}$-almost all $\omega\in\tilde{\Omega}$,
\[
\tilde{Q}_{\omega} = \Theta\circ(\bar{X},\rho,W)^{-1}.
\]
In view of \eqref{EqFOMeasureLaplace2}, the above
identification of the limit points establishes \eqref
{EqVariationalUpperBound}, the variational upper bound.


\section{Remarks and extensions}
\label{SectExtensions}

A feature of the weak convergence approach to large deviations is its
flexibility. To illustrate this point we show in Section~\ref
{SectExtensionsDelay} how to extend the Laplace principle established
in Theorem \ref{ThLaplacePrinciple} to weakly interacting systems
described by stochastic delay (or functional) differential equations.
Before, in Section \ref{SectExtensionsComparison}, we compare our
result to the classical large deviation principle (\mbox{LDP})
established in \cite{dawsongaertner87}.

\subsection{Comparison with existing results}
\label{SectExtensionsComparison}

In this subsection we compare our results with the now classical work
\cite{dawsongaertner87}. One of the main assumptions in the latter work
is the nondegeneracy of the diffusion coefficient $\sigma$. Although
the expression for the rate function is well-defined even if the
diffusion matrix~$\sigma\sigma^{T}$ is not invertible, the assumption
of nondegeneracy is important in the proof of the \mbox{LDP}.
Additionally, weak interaction is allowed only through the drift term.
Proofs proceed by first establishing a local version of the \mbox{LDP}
which is then lifted to a global result using careful exponential
probability estimates.

The approach taken in the current paper does not require any
exponential estimates and proofs cover the setting of a degenerate
$\sigma$ and models with weak interactions in both the drift and
diffusion coefficient. The significant additional assumption made in
the current work over \cite{dawsongaertner87} is \hyperlink{APrelimitSolutions}{(A3)};
we require strong existence and uniqueness of
solutions to \eqref{EqPrelimitSDE} whereas the cited paper
only assumes weak existence and uniqueness.

Of somewhat lesser significance is the difference in the topology
considered on $\mathcal{P}(\mathbb{R}^{d})$ and the space over which
the \mbox{LDP} is formulated. In particular, in~\cite{dawsongaertner87}
the drift coefficient $b$ need not be continuous on the entire product
space $\mathbb{R}^{d}\times\mathcal{P}(\mathbb{R}^{d})$, where
$\mathcal{P}(\mathbb{R}^{d})$ is equipped with the topology of weak
convergence, but only on $\mathbb{R}^{d}\times\mathcal{M}_{\infty}$,
where $\mathcal{M}_{\infty}$ is a set of probability measures on
$\mathcal{B}(\mathbb{R}^{d})$ which satisfy certain moment bounds in
terms of a~``Lyapunov function'' $\varphi
\dvtx\mathbb{R}^{d}\rightarrow\mathbb{R}$. The set $\mathcal{M}_{\infty
}$ is equipped with the ``inductive''
topology induced by $\varphi$ \cite{dawsongaertner87}, Section 5.1.
Additional assumptions in terms of this Lyapunov function are imposed
which, in particular, ensure that $(\mu^{N}(t))_{0\leq t\leq T}$ is
a~$\mathcal{M}_{\infty}$-valued process with continuous sample paths
(see (B.2)--(B.4) in \cite{dawsongaertner87}, Section 5.1). With some
additional work, we can relax assumption \hyperlink{ACoeffContinuity}{(A2)} on
the continuity of $b$, $\sigma$ in their second argument and, under
Lyapunov function conditions analogous to (B.2)--(B.4), obtain an \mbox
{LDP} in a space similar to the one used by \cite{dawsongaertner87},
namely $C([0,T],\mathcal{M}_{\infty})$. A~minor difficulty, with the
approach taken here, in working with $\mathcal{M}_{\infty}$ is that
the inductive topology is not metrizable. However, one can proceed as
follows. Let $\mathcal{P}_{\lambda}(\mathbb{R}^{d})$ be the set of all
probability measures $\nu\in\mathcal{P}(\mathbb{R}^{d})$ such that
$\int\lambda(x)\nu(dx)<\infty$, where $\lambda
(x)=|x|k_{0}(|x|,|x|)$ for some (suitable) symmetric, continuous,
nonnegative and nondecreasing function $k_{0}$ cf. \cite{rachev91},
page~123. The topology of $\lambda$-weak convergence, that is,
weak convergence plus convergence of $\lambda$-moments, makes $\mathcal
{P}_{\lambda}(\mathbb{R}^{d})$ a Polish space; cf. Theorems 6.3.1 and~6.3.3
in \cite{rachev91}, pages 130--134. Instead of \hyperlink{ACoeffContinuity}{(A2)}, we would assume that $b$, $\sigma$ are continuous
as functions defined on $\mathbb{R}^{d}\times\mathcal{P}_{\lambda
}(\mathbb{R}^{d})$ with $\mathcal{P}_{\lambda}(\mathbb{R}^{d})$
carrying the topology of $\lambda$-weak convergence. The function
$\lambda$ plays the role of the Lyapunov function $\varphi$ used in
\cite{dawsongaertner87}, Section 5.1. The only further modification would
regard assumption~\hyperlink{ATightness}{(A5)}. In addition to tightness of the
sequences of empirical measures $(\bar{\mu}^{N})$, one would have to
guarantee that the time marginals $\bar{\mu}^{N}(t)$ stay in $\mathcal
{P}_{\lambda}(\mathbb{R}^{d})$. An appropriate condition (which would
be analogous to conditions (B.2)--(B.4) in \cite{dawsongaertner87},
Section 5.1) could be formulated in terms of the
Lyapunov function.

The expression for the rate function given in equation (1.5) in
\cite{dawsongaertner87} is different from the one given in Theorem \ref
{ThLaplacePrinciple} of this paper. The integrand in particular
involves the maximization over a class of smooth test functions $f
\dvtx
\mathbb{R}^{d}\rightarrow\mathbb{R}$ at each time point $t\in[0,T]$.
In the case where the diffusion coefficient $\sigma$ is the identity
matrix, test functions $f(t,\cdot)$, $t\in[0,T]$, induce feedback controls
for \eqref{EqLimitControlSDE2} through $u(t,\omega) \doteq
\nabla_{x} f(t,\bar{X}(t,\omega))$, cf. Remark \ref{RmAltRate}. In
this way one can see, at least formally, the equivalence of our
expression for the rate function and the expression derived in
\cite{dawsongaertner87}.

\subsection{Processes with delay}
\label{SectExtensionsDelay}

Our approach allows one to treat more general It\^{o} equations than
those of diffusion type with very little additional effort. A good
example are \makebox{SDEs} whose coefficients are allowed to depend on
the entire past of the state trajectories. Let us make this more
precise. Suppose that the coefficients $b$, $\sigma$ are progressive
functionals defined on $[0,T]\times\mathcal{X}\times\mathcal
{P}(\mathbb{R}^{d})$, where we recall that $\mathcal{X}=\mathbf
{C}([0,T],\mathbb{R}^{d})$; that is, $b$, $\sigma$ are Borel measurable
and for each $t\in[0,T]$, $b$, $\sigma$ restricted to
$[0,t]\times\mathcal{X}\times\mathcal{P}(\mathbb{R}^{d})$ is
measurable with respect to $\mathcal{B}([0,t])\times\mathcal
{G}_{t}^{\mathcal{X}}\times\mathcal{B}(\mathcal{P}(\mathbb{R}^{d}))$
where $\mathcal{G}_{t}^{\mathcal{X}}$ is the $\sigma$-algebra
generated by the coordinate process on $\mathcal{X}$. Equation \eqref
{EqPrelimitSDE}, the prelimit equation for an individual particle (the
$i$th out of $N$), takes the form
\begin{equation} \label{EqPrelimitSFDE}
dX^{i,N}(t)=b(t,X^{i,N},\mu^{N}(t))\,dt+\sigma
(t,X^{i,N},\mu^{N}(t))\,dW^{i}(t).
\end{equation}
The system of $N$ equations given by \eqref{EqPrelimitSFDE} is a system
of stochastic functional differential equations or stochastic delay
differential equations (\mbox{SFDEs} or \mbox{SDDEs}). The
corresponding uncontrolled limit equation reads
\begin{equation} \label{EqLimitSFDE}
dX(t)=b(t,X,\Law(X(t)))\,dt+\sigma(t,X,\Law(X(t)))\,dW(t),
\end{equation}
while the controlled versions of \eqref{EqPrelimitSFDE} and \eqref
{EqLimitSFDE} will be
\begin{eqnarray}
\label{EqPrelimitControlSFDE2}
d\bar{X}^{i,N}(t) &=& b( t,\bar{X}^{i,N},\bar{\mu}^{N}(t)
)\,dt+\sigma( t,\bar{X}^{i,N},\bar{\mu}^{N}(t)) u_{i}(t)\,dt
\nonumber\\[-4pt]\\[-12pt]
&&{} + \sigma(t,\bar{X}^{i,N},\bar{\mu}^{N}(t)) u_{i}(t)\,dW^{i}(t),
\nonumber\\
\label{EqLimitControlSFDE}
d\bar{X}(t) &=& b( t,\bar{X},\operatorname{Law}(\bar{X}(t)))
\,dt+\biggl(\int_{\mathbb{R}^{d_{1}}}\sigma( t,\bar{X},\operatorname
{Law}(\bar{X}(t))) y\rho_{t}(dy)\biggr)\,dt \hspace*{-28pt}\nonumber\\[-8pt]\\[-8pt]
&&{} + \sigma( t,\bar{X},\operatorname{Law}(\bar{X}(t))) u(t)\,dW(t),
\nonumber
\end{eqnarray}
respectively. In \eqref{EqPrelimitControlSFDE2} $u_{i}$ is the
$i$th component of $u=(u_{1},\ldots,u_{N})$ for\break some~\mbox{$u\in\mathcal
{U}_{N}$}, while $\rho$ in \eqref{EqLimitControlSFDE} is an
adapted $\mathcal{R}_{1}$-valued random variable as in~\eqref
{EqLimitControlSDE}.\looseness=1

The Laplace principle can now be established in the same way as above
except for two points which need modification. Those are the
formulation of the local martingale problem in Section \ref
{SectAuxiliary} and the continuity
assumption~\hyperlink{APrelimitSolutions}{(A3$'$)}--\hyperlink{ATightness}{(A5$'$)}
the analogues of assumptions \hyperlink{APrelimitSolutions}{(A3)}--\hyperlink{ATightness}{(A5)}, which are obtained by replacing all
references to \eqref{EqPrelimitSDE}, \eqref{EqLimitSDE},
\eqref{EqPrelimitControlSDE}, \eqref{EqLimitControlSDE} with
\eqref{EqPrelimitSFDE}, \eqref{EqLimitSFDE}, (\ref
{EqPrelimitControlSFDE2}), (\ref{EqLimitControlSFDE}), respectively.

As to the martingale problem, we have to redefine the processes
$M^{\Theta}_{f}$ and the ``generators'' $\mathcal{A}_{s}^{\Theta}(f)$
according to
\begin{eqnarray*}
M_{f}^{\Theta}(t,(\varphi,r,w))&\doteq& f(\varphi(t),w(t)
) - f(\varphi(0),0) \\
&&{} -\int_{0}^{t}\int_{\mathbb{R}^{d_{1}}}\mathcal{A}_{s}^{\Theta
}(f)(\varphi,y,w(s))r_{s}(dy)\,ds,
\end{eqnarray*}
where for $s\in[0,T]$, $\varphi\in\mathcal{X}$, $y,z\in\mathbb{R}^{d_{1}}$,
\begin{eqnarray*}
\mathcal{A}_{s}^{\Theta}(f)(\varphi,y,z)&\doteq&\langle b(s,\varphi
,\nu_{\Theta}(s))+\sigma(s,\varphi,\nu_{\Theta}(s))y,\nabla
_{x}f(\varphi(s),z)\rangle\\
&&{} + \frac{1}{2} \sum_{j,k=1}^{d}(\sigma{\sigma}^{\mathsf
{T}})_{jk}(s,\varphi,\nu_{\Theta}(s))\,\frac{\partial
^{2}f}{\partial x_{j}\,\partial x_{k}}(\varphi(s),z) \\
&&{} + \frac{1}{2} \sum_{l=1}^{d_{1}}\frac{\partial^{2}f}{\partial
z_{l}\,\partial z_{l}}(\varphi(s),z) \\
&&{} + \sum_{k=1}^{d} \sum_{l=1}^{d_{1}} \sigma_{kl}(s,\varphi,\nu
_{\Theta}(s)) \,\frac{\partial^{2}f}{\partial x_{k}\,\partial
z_{l}}(\varphi(s),z).
\end{eqnarray*}
Notice that the test functions $f$ are still elements of $\mathbf
{C}^{2}(\mathbb{R}^{d}\times\mathbb{R}^{d_{1}})$. With these
redefinitions, Lemma \ref{LemmaMartingaleProblem} continues to hold.

Assumption \hyperlink{ACoeffContinuity}{(A2)} about the continuity of $b$,
$\sigma$ has to be modified in order to account for the time dependence
and be supplemented by a condition of uniform continuity and
boundedness, which is automatically satisfied in the diffusion case.

\begin{longlist}
\item[(A2$'$)] The functions $b(t,\cdot,\cdot)$,
$\sigma(t,\cdot,\cdot)$ are continuous, and uniformly continuous and bounded on
sets $B\times P$ whenever $B \subset\mathcal{X}$ is bounded and $P
\subset\mathcal{P}(\mathbb{R}^{d})$ is compact, uniformly in $t\in[0,T]$.
\end{longlist}

Define the set $\mathcal{P}_{\infty}^{\star}$ of probability measures
on $\mathcal{B}(\mathcal{Z})$ as the set $\mathcal{P}_{\infty}$ in
Section~\ref{SectLPLimit}, replacing reference to \eqref
{EqLimitControlSDE} with \eqref{EqLimitControlSFDE}. Then the
following large deviation (or Laplace) principle holds.
\begin{thrm}
\label{ThLaplacePrincipleDelay} Grant assumptions \textup{\hyperlink{AInitialCondition}{(A1)},
\hyperlink{ACoeffContinuity}{(A2$'$)}--\hyperlink{ATightness}{(A5$'$)}}.
Then the family of empirical measures $\{\mu
^{N},N\in\mathbb{N}\}$ associated with \eqref
{EqPrelimitSFDE} satisfies the Laplace principle with rate function
\[
\tilde{I}(\theta) = \inf_{\Theta\in\mathcal{P}_{\infty}^{\star}\dvtx\Theta
_{\mathcal{X}}=\theta}  \frac{1}{2}\int_{\mathcal{R}} \int_{\mathbb
{R}^{d_{1}}\times[0,T]} |y|^{2}  r(dy \times dt)\Theta_{\mathcal{R}}(dr).
\]
\end{thrm}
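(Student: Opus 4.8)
The strategy is to rerun the proof of Theorem~\ref{ThLaplacePrinciple} essentially word for word, the constructions in Sections~\ref{SectLPLowerBound} and \ref{SectLPUpperBound} having been arranged to tolerate path-dependent coefficients; the only ingredients that used the diffusion structure explicitly are the local martingale problem of Section~\ref{SectAuxiliary} and the continuity assumption (A\ref{ACoeffContinuity}), and both have been replaced above by their delay analogues. First, the variational representation \eqref{EqVariationalPrelimit} persists: it requires only strong existence and uniqueness for the prelimit system, now Assumption~(A3') for Eq.~\eqref{EqPrelimitSFDE}, together with Girsanov's theorem and Theorem~3.6 in \cite{budhirajadupuis00} (the representation of the controlled solution as a measurable image of the shifted driving path is unaffected by the delay). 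Hence it suffices to prove the variational bounds \eqref{EqVariationalLowerBound} and \eqref{EqVariationalUpperBound} with $\mathcal{P}_{\infty}$ replaced by $\mathcal{P}_{\infty}^{\star}$ and Eq.~\eqref{EqLimitControlSDE} replaced by Eq.~\eqref{EqLimitControlSFDE}.

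For the lower bound I would introduce the functional occupation measures $Q^{N}$ exactly as in \eqref{ExFOMeasure}, now built from the solutions of Eq.~\eqref{EqPrelimitControlSFDE2}. Lemma~\ref{LemmaTightness} carries over unchanged, since tightness of the first marginals is precisely Assumption~(A5'), tightness of the third marginals is automatic, and the argument for the second marginals uses only the cost bound \eqref{ExControlBound} and the tightness-function property of $g$ on $\mathcal{R}_{1}$, none of which sees the coefficients. The substantive step is the delay version of Lemma~\ref{LemmaLimitMeasure}: every weak limit point $Q$ of $(Q^{N})$ should be supported on measures corresponding to weak solutions of Eq.~\eqref{EqLimitControlSFDE}. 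One uses the redefined $M^{\Theta}_{f}$ and $\mathcal{A}^{\Theta}_{s}$, the reformulated Lemma~\ref{LemmaMartingaleProblem}, and the same randomized stopping times $\tau_{k}$ as in the proof of Lemma~\ref{LemmaLimitMeasure}. The one point needing care is the continuity of the map $\Phi$: now $\mathcal{A}^{\Theta}_{s}(f)(\phi,y,z)$ depends on the whole path $\phi$ up to time $s$ through $b(s,\phi,\cdot)$ and $\sigma(s,\phi,\cdot)$. However, on $\{\tau_{k}>0\}$ the stopped path satisfies $\sup_{s\le\tau_{k}}|\phi(s)|\le k+1$, so $\phi$ restricted to $[0,\tau_{k}]$ lies in a bounded subset of $\mathcal{X}$, while $\nu_{\Theta_{l}}(s)$ and $\nu_{\Theta}(s)$ range over a relatively compact subset of $\mathcal{P}(\mathbb{R}^{d})$; Assumption~(A\ref{ACoeffContinuity}') --- uniform continuity and boundedness of $b(t,\cdot,\cdot),\sigma(t,\cdot,\cdot)$ on such bounded-times-compact sets, uniformly in $t$ --- then delivers the uniform convergence of the integrand in \eqref{EqLemmaLimitMartingale} needed to conclude $\Phi(\Theta_{l})\to\Phi(\Theta)$. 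The It\^{o}-isometry estimate showing $\Mean[\Phi(Q^{N})^{2}]\to0$ is formally identical, the martingale part of It\^{o}'s formula being insensitive to the path dependence. The lower bound then follows from \eqref{EqFOMeasureLaplace}, Fatou's lemma, and the definition of $\tilde{I}$, exactly as in Section~\ref{SectLPLowerBound}.

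For the upper bound I would follow Section~\ref{SectLPUpperBound} verbatim: given $\Theta\in\mathcal{P}_{\infty}^{\star}$, realize it as the law of the canonical triple $(\bar{X},\rho,W)$ on $\mathcal{Z}$, replace $\rho$ by the ordinary control $u(t)=\int_{\mathbb{R}^{d_{1}}}y\,\rho_{t}(dy)$ (which does not increase the cost and, by weak uniqueness (A4'), leaves $\Theta$ unchanged), pass to the countable product space, and solve the prelimit system \eqref{EqPrelimitControlSFDE2} driven by the i.i.d.\ copies $(\rho^{i,\infty},W^{i,\infty})$, solvability being again (A3'). Forming $\tilde{Q}^{N}$ as before, tightness is a consequence of (A5') and the i.i.d.\ structure of the control/noise pairs; every limit point is supported on weak solutions of Eq.~\eqref{EqLimitControlSFDE} by the delay version of Lemma~\ref{LemmaLimitMeasure}; Varadarajan's theorem fixes the $\mathcal{R}_{1}\times\mathcal{W}$-marginal to $\Theta\circ(\rho,W)^{-1}$; and weak uniqueness (A4') identifies the limit with $\Theta$. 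Then \eqref{EqFOMeasureLaplace2} yields \eqref{EqVariationalUpperBound}, and combining the two bounds as in Section~\ref{SectLPLimit} gives the Laplace principle with rate function $\tilde{I}$.

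I expect the only genuine obstacle to be the continuity of $\Phi$ in the delay version of Lemma~\ref{LemmaLimitMeasure}, i.e.\ controlling the path-dependence of $b$ and $\sigma$ when passing to the limit. This is exactly why the stopping times $\tau_{k}$ were defined through $\sup_{s\le t}|\phi(s)|$ rather than $|\phi(t)|$ and why (A\ref{ACoeffContinuity}') asks for uniform rather than merely pointwise continuity on bounded sets of $\mathcal{X}$ (bounded subsets of $\mathcal{X}$, unlike those of $\mathbb{R}^{d}$, are not relatively compact, so pointwise continuity would not suffice); with these provisions in place the estimate is routine.
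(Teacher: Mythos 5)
Your proposal is correct and coincides with the paper's own argument, which is given there simply as being ``completely analogous'' to Theorem~\ref{ThLaplacePrinciple}: the redefined $M_{f}^{\Theta}$ and $\mathcal{A}_{s}^{\Theta}(f)$, the randomized stopping times $\tau_{k}$ (which bound $\sup_{s\leq \tau_{k}}|\phi(s)|$ and hence keep the relevant paths in a bounded subset of $\mathcal{X}$), and Assumption~(A\ref{ACoeffContinuity}') do exactly the work you assign them, and your identification of the continuity of $\Phi$ in the delay version of Lemma~\ref{LemmaLimitMeasure} as the only point of substance is precisely the point the paper flags. One small correction to a parenthetical in your upper-bound paragraph: replacing $\rho$ by the relaxed control $\tilde{\rho}$ induced by $u(t)=\int_{\mathbb{R}^{d_{1}}}y\,\rho_{t}(dy)$ in general \emph{does} change $\Theta$ (weak uniqueness is of no help here, since the joint law of control and Wiener process changes); the replacement is legitimate, as in Section~\ref{SectLPUpperBound}, because the new measure still lies in $\mathcal{P}_{\infty}^{\star}$, has the same $\mathcal{X}$-marginal, and carries no larger cost, so it suffices to prove \eqref{EqVariationalUpperBound} for it.
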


Note that there is also a simpler-looking form of the rate function as
in Remark~\ref{RmAltRate}. The proof of Theorem \ref
{ThLaplacePrincipleDelay} is completely analogous to that of
Theorem~\ref{ThLaplacePrinciple} given in Sections \ref
{SectLPLowerBound} and \ref{SectLPUpperBound}. The proof of Lemma \ref
{LemmaLimitMeasure}, in particular, and specifically the use of the
local martingale problem and randomized stopping times there was
tailored to fit not only the diffusion case, but the case of dynamics
with delay as well.

Finally, note that we could further generalize our model to include the
case of coefficients $b$, $\sigma$ which also depend on the past of the
empirical process. In this case, $b$, $\sigma$ would be progressive
functionals defined on $[0,T]\times\mathcal{X}\times\mathcal{P}(\mathcal
{X})$, and a Laplace principle could be established in the same way as before.

\begin{appendix}\label{AppLocalMartingales}

\section*{\texorpdfstring{Appendix: Local martingales with respect
to~\mbox{$(\hat{\mathcal{G}}_{t})$}~and~\mbox{$(\mathcal{G}_{t})$}}
{Appendix: Local martingales with respect to (Gt) and (Gt)}}

Let the notation be that of the proof of Lemma \ref{LemmaLimitMeasure}
in Section \ref{SectLPLowerBound}. Let $\Theta\in\mathcal{P}(Z)$, $f
\in\mathbf{C}^{2}(\mathbb{R}^{d})$, and set $M(t)\doteq M^{\Theta
}_{f}(t)$, $t \in[0,T]$. Notice that $M$ is a~random object defined on
$(\mathcal{Z},\mathcal{B}(\mathcal{Z}))$ with values in $\mathcal{X} =
\mathbf{C}([0,T],\mathbb{R}^{d})$, which can be identified with the
random object living on $(\hat{\mathcal{Z}},\mathcal{B}(\hat{\mathcal
{Z}}))$ given by
\[
\mathcal{Z}\times[0,1] \ni(z,s) \quad\mapsto\quad(M(t,z))_{t\in[0,T]} \in
\mathcal{X}.
\]
Let $k \in\mathbb{N}$. Suppose that $M(\cdot\wedge\tau_{k})$ is a
martingale under $\hat{\Theta} = \Theta\times\lambda$ with respect to
the canonical filtration $(\hat{\mathcal{G}}_{t})$ in $\mathcal{B}(\hat
{\mathcal{Z}})$. Set
\[
\tau^{\circ}_{k}(z) \doteq\tau_{k}(z,0),\qquad  z \in\mathcal{Z}.
\]
We claim that $M(\cdot\wedge\tau^{\circ}_{k})$ is a martingale under $\Theta
$ with respect to the canonical filtration $(\mathcal{G}_{t})$ in
$\mathcal{B}(\mathcal{Z})$.
\begin{pf*}{Proof of the martingale property}
Since $\tau_{k}$ is a $(\hat{\mathcal
{G}}_{t})$-stopping time and $\hat{\mathcal{G}}_{t} = \mathcal{G}_{t}
\times\mathcal{B}([0,1])$, $t \in[0,T]$, it follows that $\tau^{\circ
}_{k}$ is a $(\mathcal{G}_{t})$-stopping time. Moreover, $\tau^{\circ
}_{k}$ is also a $(\hat{\mathcal{G}}_{t})$-stopping time, because
$\mathcal{G}_{t}$ can be identified with $\mathcal{G}_{t}\times\{
\varnothing,[0,1]\}$, $t \in[0,T]$, and $(\mathcal{G}_{t}\times\{
\varnothing,[0,1]\})$ is a subfiltration of $(\hat{\mathcal{G}}_{t})$.

Let $s, t \in[0,T]$, $s \leq t$. We have to show that
\[
\Mean_{\Theta}[ M(t\wedge\tau^{\circ}_{k})\cdot\mathbf
{1}_{Z}] = \Mean_{\Theta}[ M(s\wedge\tau^{\circ}_{k})\cdot
\mathbf{1}_{Z}]  \qquad\mbox{for all } Z \in\mathcal{G}_{s}.
\]
Since $M(\cdot\wedge\tau_{k})$ is a martingale under $\hat{\Theta}$ with
respect to $(\hat{\mathcal{G}}_{t})$ and $\tau^{\circ}_{k}$ is also a
$(\hat{\mathcal{G}}_{t})$-stopping time, it follows that $M(\cdot\wedge\tau
_{k}\wedge\tau^{\circ}_{k})$ is a martingale under $\hat{\Theta}$ with
respect to $(\hat{\mathcal{G}}_{t})$. Yet for all $(z,t) \in\hat
{\mathcal{Z}}$,
\[
(\tau_{k}\wedge\tau^{\circ}_{k})(z,t) = \tau_{k}(z,t)\wedge\tau
_{k}(z,0) = \tau_{k}(z,0) = \tau^{\circ}_{k}(z)
\]
by construction of $\tau_{k}$ and definition of $\tau^{\circ}_{k}$.
Hence we know that
\[
\Mean_{\hat{\Theta}}[ M(t\wedge\tau^{\circ}_{k})\cdot\mathbf
{1}_{\hat{Z}}] = \Mean_{\hat{\Theta}}[ M(s\wedge\tau^{\circ
}_{k})\cdot\mathbf{1}_{\hat{Z}}]  \qquad\mbox{for all } \hat{Z}
\in\hat{\mathcal{G}}_{s}.
\]
Let $Z \in\mathcal{G}_{s}$. Then $Z\times[0,1] \in\hat{\mathcal
{G}}_{s}$ and, by Fubini's theorem,
\begin{eqnarray*}
\Mean_{\Theta}[ M(t\wedge\tau^{\circ}_{k})\cdot\mathbf
{1}_{Z}] &=& \int_{\mathcal{Z}} M\bigl(t\wedge\tau^{\circ}_{k}(z)\bigr)\cdot
\mathbf{1}_{Z}(z)  \Theta(dz) \\
&=& \int_{[0,1]} \int_{\mathcal{Z}} M\bigl(t\wedge\tau^{\circ
}_{k}(z)\bigr)\cdot\mathbf{1}_{Z\times[0,1]}(z,a)  \Theta(dz)\lambda(da) \\
&=& \int_{\mathcal{Z}\times[0,1]} M\bigl(t\wedge\tau^{\circ}_{k}(z)\bigr)\cdot
\mathbf{1}_{Z\times[0,1]}(z,a)  \hat{\Theta}(dz \times da) \\
&=& \Mean_{\hat{\Theta}}\bigl[ M(t\wedge\tau^{\circ}_{k})\cdot\mathbf
{1}_{Z\times[0,1]}\bigr] \\
&=& \Mean_{\hat{\Theta}}\bigl[ M(s\wedge\tau^{\circ}_{k})\cdot\mathbf
{1}_{Z\times[0,1]}\bigr] \\
&=& \int_{\mathcal{Z}\times[0,1]} M\bigl(s\wedge\tau^{\circ}_{k}(z)\bigr)\cdot
\mathbf{1}_{Z\times[0,1]}(z,a)  \hat{\Theta}(dz \times da)\\
&=& \Mean_{\Theta}[ M(s\wedge\tau^{\circ}_{k})\cdot\mathbf
{1}_{Z}].
\end{eqnarray*}
\upqed\end{pf*}
\end{appendix}

\section*{Acknowledgments}
The authors thank the Editor and an anonymous referee for their
critique and helpful comments and suggestions.

%

%
\printaddresses


\begin{thebibliography}{35}

\bibitem{benarousguionnet95}
\begin{barticle}[mr]
\bauthor{\bsnm{Arous},~\bfnm{G.~B.}\binits{G.~B.}} \AND
  \bauthor{\bsnm{Guionnet},~\bfnm{A.}\binits{A.}}
(\byear{1995}).
\btitle{Large deviations for {L}angevin spin glass dynamics}.
\bjournal{Probab. Theory Related Fields}
\bvolume{102}
\bpages{455--509}.
\bid{doi={10.1007/BF01198846}, mr={1346262}}
\end{barticle}
\endbibitem

\bibitem{benarousbrunaud90}
\begin{barticle}[mr]
\bauthor{\bsnm{Ben~Arous},~\bfnm{G{\'e}rard}\binits{G.}} \AND
  \bauthor{\bsnm{Brunaud},~\bfnm{Marc}\binits{M.}}
(\byear{1990}).
\btitle{M\'ethode de {L}aplace: \'Etude variationnelle des fluctuations de
  diffusions de type ``champ moyen}.''
\bjournal{Stochastics Stochastics Rep.}
\bvolume{31}
\bpages{79--144}.
\bid{mr={1080535}}
\end{barticle}
\endbibitem

\bibitem{benarouszeitouni99}
\begin{barticle}[mr]
\bauthor{\bsnm{Ben~Arous},~\bfnm{G.}\binits{G.}} \AND
  \bauthor{\bsnm{Zeitouni},~\bfnm{O.}\binits{O.}}
(\byear{1999}).
\btitle{Increasing propagation of chaos for mean field models}.
\bjournal{Ann. Inst. H. Poincar\'e Probab. Statist.}
\bvolume{35}
\bpages{85--102}.
\bid{doi={10.1016/S0246-0203(99)80006-5}, mr={1669916}}
\end{barticle}
\endbibitem

\bibitem{billingsley99}
\begin{bbook}[mr]
\bauthor{\bsnm{Billingsley},~\bfnm{Patrick}\binits{P.}}
(\byear{1999}).
\btitle{Convergence of Probability Measures},
\bedition{2nd} ed.
\bpublisher{Wiley}, \baddress{New York}.
\bid{doi={10.1002/9780470316962}, mr={1700749}}
\end{bbook}
\endbibitem

\bibitem{bouedupuis98}
\begin{barticle}[mr]
\bauthor{\bsnm{Bou{\'e}},~\bfnm{Michelle}\binits{M.}} \AND
  \bauthor{\bsnm{Dupuis},~\bfnm{Paul}\binits{P.}}
(\byear{1998}).
\btitle{A variational representation for certain functionals of {B}rownian
  motion}.
\bjournal{Ann. Probab.}
\bvolume{26}
\bpages{1641--1659}.
\bid{doi={10.1214/aop/1022855876}, mr={1675051}}
\end{barticle}
\endbibitem

\bibitem{budhirajadupuis00}
\begin{barticle}[mr]
\bauthor{\bsnm{Budhiraja},~\bfnm{Amarjit}\binits{A.}} \AND
  \bauthor{\bsnm{Dupuis},~\bfnm{Paul}\binits{P.}}
(\byear{2000}).
\btitle{A variational representation for positive functionals of infinite
  dimensional {B}rownian motion}.
\bjournal{Probab. Math. Statist.}
\bvolume{20}
\bpages{39--61}.
\bid{mr={1785237}}
\end{barticle}
\endbibitem

\bibitem{budhirajaetal08}
\begin{barticle}[mr]
\bauthor{\bsnm{Budhiraja},~\bfnm{Amarjit}\binits{A.}},
  \bauthor{\bsnm{Dupuis},~\bfnm{Paul}\binits{P.}} \AND
  \bauthor{\bsnm{Maroulas},~\bfnm{Vasileios}\binits{V.}}
(\byear{2008}).
\btitle{Large deviations for infinite dimensional stochastic dynamical
  systems}.
\bjournal{Ann. Probab.}
\bvolume{36}
\bpages{1390--1420}.
\bid{doi={10.1214/07-AOP362}, mr={2435853}}
\end{barticle}
\endbibitem

\bibitem{budhirajaetal09}
\begin{bmisc}[auto:STB|2010-11-18|09:18:59]
\bauthor{\bsnm{Budhiraja},~\bfnm{A.}\binits{A.}},
  \bauthor{\bsnm{Dupuis},~\bfnm{P.}\binits{P.}} \AND
  \bauthor{\bsnm{Maroulas},~\bfnm{V.}\binits{V.}}
(\byear{2011}).
\bhowpublished{Variational representations for continuous time processes.
\textit{Ann. Inst. Henri Poincar\'{e} Probab. Stat.} \textbf{47} 725--747}.
\bid{mr={2841073}}
\end{bmisc}
\endbibitem

\bibitem{dawsondelmoral05}
\begin{bincollection}[mr]
\bauthor{\bsnm{Dawson},~\bfnm{Donald~A.}\binits{D.~A.}} \AND
  \bauthor{\bsnm{Del~Moral},~\bfnm{Pierre}\binits{P.}}
(\byear{2005}).
\btitle{Large deviations for interacting processes in the strong topology}.
In \bbooktitle{Statistical Modeling and Analysis for Complex Data Problems}.
\bseries{GERAD 25th Anniv. Ser.}
\bvolume{1}
\bpages{179--208}.
\bpublisher{Springer}, \baddress{New York}.
\bid{doi={10.1007/0-387-24555-3_10}, mr={2189537}}
\end{bincollection}
\endbibitem

\bibitem{dawsongaertner87}
\begin{barticle}[mr]
\bauthor{\bsnm{Dawson},~\bfnm{Donald~A.}\binits{D.~A.}} \AND
  \bauthor{\bsnm{G{\"a}rtner},~\bfnm{J{\"u}rgen}\binits{J.}}
(\byear{1987}).
\btitle{Large deviations from the {M}c{K}ean--{V}lasov limit for weakly
  interacting diffusions}.
\bjournal{Stochastics}
\bvolume{20}
\bpages{247--308}.
\bid{mr={0885876}}
\end{barticle}
\endbibitem

\bibitem{dawsongaertner94}
\begin{barticle}[mr]
\bauthor{\bsnm{Dawson},~\bfnm{D.~A.}\binits{D.~A.}} \AND
  \bauthor{\bsnm{G{\"a}rtner},~\bfnm{J.}\binits{J.}}
(\byear{1994}).
\btitle{Multilevel large deviations and interacting diffusions}.
\bjournal{Probab. Theory Related Fields}
\bvolume{98}
\bpages{423--487}.
\bid{doi={10.1007/BF01192835}, mr={1271106}}
\end{barticle}
\endbibitem

\bibitem{delmoralguionnet98}
\begin{barticle}[mr]
\bauthor{\bsnm{Del~Moral},~\bfnm{P.}\binits{P.}} \AND
  \bauthor{\bsnm{Guionnet},~\bfnm{A.}\binits{A.}}
(\byear{1998}).
\btitle{Large deviations for interacting particle systems: Applications to
  non-linear filtering}.
\bjournal{Stochastic Process. Appl.}
\bvolume{78}
\bpages{69--95}.
\bid{doi={10.1016/S0304-4149(98)00057-X}, mr={1653296}}
\end{barticle}
\endbibitem

\bibitem{djehicheschied98}
\begin{barticle}[mr]
\bauthor{\bsnm{Djehiche},~\bfnm{Boualem}\binits{B.}} \AND
  \bauthor{\bsnm{Schied},~\bfnm{Alexander}\binits{A.}}
(\byear{1998}).
\btitle{Large deviations for hierarchical systems of interacting jump
  processes}.
\bjournal{J. Theoret. Probab.}
\bvolume{11}
\bpages{1--24}.
\bid{doi={10.1023/A:1021690707556}, mr={1607396}}
\end{barticle}
\endbibitem

\bibitem{DuanMillet09}
\begin{barticle}[mr]
\bauthor{\bsnm{Duan},~\bfnm{Jinqiao}\binits{J.}} \AND
  \bauthor{\bsnm{Millet},~\bfnm{Annie}\binits{A.}}
(\byear{2009}).
\btitle{Large deviations for the {B}oussinesq equations under random
  influences}.
\bjournal{Stochastic Process. Appl.}
\bvolume{119}
\bpages{2052--2081}.
\bid{doi={10.1016/j.spa.2008.10.004}, mr={2519356}}
\end{barticle}
\endbibitem

\bibitem{dudley02}
\begin{bbook}[mr]
\bauthor{\bsnm{Dudley},~\bfnm{R.~M.}\binits{R.~M.}}
(\byear{2002}).
\btitle{Real Analysis and Probability}.
\bseries{Cambridge Studies in Advanced Mathematics}
\bvolume{74}.
\bpublisher{Cambridge Univ. Press}, \baddress{Cambridge}.
\bid{mr={1932358}}
\end{bbook}
\endbibitem

\bibitem{dupuisellis97}
\begin{bbook}[mr]
\bauthor{\bsnm{Dupuis},~\bfnm{Paul}\binits{P.}} \AND
  \bauthor{\bsnm{Ellis},~\bfnm{Richard~S.}\binits{R.~S.}}
(\byear{1997}).
\btitle{A Weak Convergence Approach to the Theory of Large Deviations}.
\bpublisher{Wiley}, \baddress{New York}.
\bid{mr={1431744}}
\end{bbook}
\endbibitem

\bibitem{fengkurtz06}
\begin{bbook}[mr]
\bauthor{\bsnm{Feng},~\bfnm{Jin}\binits{J.}} \AND
  \bauthor{\bsnm{Kurtz},~\bfnm{Thomas~G.}\binits{T.~G.}}
(\byear{2006}).
\btitle{Large Deviations for Stochastic Processes}.
\bseries{Mathematical Surveys and Monographs}
\bvolume{131}.
\bpublisher{Amer. Math. Soc.}, \baddress{Providence, RI}.
\bid{mr={2260560}}
\end{bbook}\vadjust{\goodbreak}
\endbibitem

\bibitem{fontbona04}
\begin{barticle}[mr]
\bauthor{\bsnm{Fontbona},~\bfnm{J.}\binits{J.}}
(\byear{2004}).
\btitle{Uniqueness for a weak nonlinear evolution equation and large deviations
  for diffusing particles with electrostatic repulsion}.
\bjournal{Stochastic Process. Appl.}
\bvolume{112}
\bpages{119--144}.
\bid{doi={10.1016/j.spa.2004.01.008}, mr={2062570}}
\end{barticle}
\endbibitem

\bibitem{funaki84}
\begin{barticle}[mr]
\bauthor{\bsnm{Funaki},~\bfnm{Tadahisa}\binits{T.}}
(\byear{1984}).
\btitle{A certain class of diffusion processes associated with nonlinear
  parabolic equations}.
\bjournal{Z. Wahrsch. Verw. Gebiete}
\bvolume{67}
\bpages{331--348}.
\bid{doi={10.1007/BF00535008}, mr={0762085}}
\end{barticle}
\endbibitem

\bibitem{gaertner88}
\begin{barticle}[mr]
\bauthor{\bsnm{G{\"a}rtner},~\bfnm{J{\"u}rgen}\binits{J.}}
(\byear{1988}).
\btitle{On the {M}c{K}ean--{V}lasov limit for interacting diffusions}.
\bjournal{Math. Nachr.}
\bvolume{137}
\bpages{197--248}.
\bid{doi={10.1002/mana.19881370116}, mr={0968996}}
\end{barticle}
\endbibitem

\bibitem{herrmannetalii08}
\begin{barticle}[mr]
\bauthor{\bsnm{Herrmann},~\bfnm{Samuel}\binits{S.}},
  \bauthor{\bsnm{Imkeller},~\bfnm{Peter}\binits{P.}} \AND
  \bauthor{\bsnm{Peithmann},~\bfnm{Dierk}\binits{D.}}
(\byear{2008}).
\btitle{Large deviations and a~{K}ramers' type law for self-stabilizing
  diffusions}.
\bjournal{Ann. Appl. Probab.}
\bvolume{18}
\bpages{1379--1423}.
\bid{doi={10.1214/07-AAP489}, mr={2434175}}
\end{barticle}
\endbibitem

\bibitem{karatzasshreve91}
\begin{bbook}[mr]
\bauthor{\bsnm{Karatzas},~\bfnm{Ioannis}\binits{I.}} \AND
  \bauthor{\bsnm{Shreve},~\bfnm{Steven~E.}\binits{S.~E.}}
(\byear{1991}).
\btitle{Brownian Motion and Stochastic Calculus},
\bedition{2nd} ed.
\bseries{Graduate Texts in Mathematics}
\bvolume{113}.
\bpublisher{Springer}, \baddress{New York}.\break
\bid{mr={1121940}}
\end{bbook}
\endbibitem

\bibitem{kushner90}
\begin{bbook}[mr]
\bauthor{\bsnm{Kushner},~\bfnm{Harold~J.}\binits{H.~J.}}
(\byear{1990}).
\btitle{Weak Convergence Methods and Singularly Perturbed Stochastic Control
  and Filtering Problems}.
\bseries{Systems \& Control: Foundations \& Applications}
\bvolume{3}.
\bpublisher{Birkh\"auser}, \baddress{Boston, MA}.
\bid{mr={1102242}}
\end{bbook}
\endbibitem

\bibitem{leonard95b}
\begin{barticle}[mr]
\bauthor{\bsnm{L{\'e}onard},~\bfnm{Christian}\binits{C.}}
(\byear{1995}).
\btitle{Large deviations for long range interacting particle systems with
  jumps}.
\bjournal{Ann. Inst. H. Poincar\'e Probab. Statist.}
\bvolume{31}
\bpages{289--323}.\break
\bid{mr={1324810}}
\end{barticle}
\endbibitem

\bibitem{leonard95}
\begin{barticle}[mr]
\bauthor{\bsnm{L{\'e}onard},~\bfnm{C.}\binits{C.}}
(\byear{1995}).
\btitle{On large deviations for particle systems associated with spatially
  homogeneous {B}oltzmann type equations}.
\bjournal{Probab. Theory Related Fields}
\bvolume{101}
\bpages{1--44}.
\bid{doi={10.1007/BF01192194}, mr={1314173}}
\end{barticle}
\endbibitem

\bibitem{meleard95}
\begin{bincollection}[mr]
\bauthor{\bsnm{M{\'e}l{\'e}ard},~\bfnm{Sylvie}\binits{S.}}
(\byear{1996}).
\btitle{Asymptotic behaviour of some interacting particle systems;
  {M}c{K}ean--{V}lasov and {B}oltzmann models}.
In \bbooktitle{Probabilistic Models for Nonlinear Partial Differential
  Equations ({M}ontecatini {T}erme, 1995)}.
\bseries{Lecture Notes in Math.}
\bvolume{1627}
\bpages{42--95}.
\bpublisher{Springer}, \baddress{Berlin}.
\bid{doi={10.1007/BFb0093177}, mr={1431299}}
\bptnote{check year}
\end{bincollection}
\endbibitem

\bibitem{oelschlaeger84}
\begin{barticle}[mr]
\bauthor{\bsnm{Oelschl{\"a}ger},~\bfnm{Karl}\binits{K.}}
(\byear{1984}).
\btitle{A martingale approach to the law of large numbers for weakly
  interacting stochastic processes}.
\bjournal{Ann. Probab.}
\bvolume{12}
\bpages{458--479}.\break
\bid{mr={0735849}}
\end{barticle}
\endbibitem

\bibitem{rachev91}
\begin{bbook}[mr]
\bauthor{\bsnm{Rachev},~\bfnm{Svetlozar~T.}\binits{S.~T.}}
(\byear{1991}).
\btitle{Probability Metrics and the Stability of Stochastic Models}.
\bpublisher{Wiley}, \baddress{Chichester}.
\bid{mr={1105086}}
\end{bbook}
\endbibitem

\bibitem{renzhang05}
\begin{barticle}[mr]
\bauthor{\bsnm{Ren},~\bfnm{Jiangang}\binits{J.}} \AND
  \bauthor{\bsnm{Zhang},~\bfnm{Xicheng}\binits{X.}}
(\byear{2005}).
\btitle{Schilder theorem for the {B}rownian motion on the diffeomorphism group
  of the circle}.
\bjournal{J. Funct. Anal.}
\bvolume{224}
\bpages{107--133}.
\bid{mr={2139106}}
\end{barticle}
\endbibitem

\bibitem{roeckneretal09}
\begin{barticle}[mr]
\bauthor{\bsnm{R{\"o}ckner},~\bfnm{Michael}\binits{M.}},
  \bauthor{\bsnm{Zhang},~\bfnm{Tusheng}\binits{T.}} \AND
  \bauthor{\bsnm{Zhang},~\bfnm{Xicheng}\binits{X.}}
(\byear{2010}).
\btitle{Large deviations for stochastic tamed 3{D} {N}avier-{S}tokes
  equations}.
\bjournal{Appl. Math. Optim.}
\bvolume{61}
\bpages{267--285}.\break
\bid{doi={10.1007/s00245-009-9089-6}, mr={2585144}}
\end{barticle}
\endbibitem

\bibitem{sritharansundar06}
\begin{barticle}[mr]
\bauthor{\bsnm{Sritharan},~\bfnm{S.~S.}\binits{S.~S.}} \AND
  \bauthor{\bsnm{Sundar},~\bfnm{P.}\binits{P.}}
(\byear{2006}).
\btitle{Large deviations for the two-dimensional {N}avier-{S}tokes equations
  with multiplicative noise}.
\bjournal{Stochastic Process. Appl.}
\bvolume{116}
\bpages{1636--1659}.
\bid{doi={10.1016/j.spa.2006.04.001}, mr={2269220}}
\end{barticle}
\endbibitem

\bibitem{stroockvaradhan79}
\begin{bbook}[mr]
\bauthor{\bsnm{Stroock},~\bfnm{Daniel~W.}\binits{D.~W.}} \AND
  \bauthor{\bsnm{Varadhan},~\bfnm{S.~R.~Srinivasa}\binits{S.~R.~S.}}
(\byear{1979}).
\btitle{Multidimensional Diffusion Processes}.
\bseries{Grundlehren der Mathematischen Wissenschaften [Fundamental Principles
  of Mathematical Sciences]}
\bvolume{233}.
\bpublisher{Springer}, \baddress{Berlin}.
\bid{mr={0532498}}
\end{bbook}
\endbibitem

\bibitem{tanaka82}
\begin{bincollection}[mr]
\bauthor{\bsnm{Tanaka},~\bfnm{Hiroshi}\binits{H.}}
(\byear{1984}).
\btitle{Limit theorems for certain diffusion processes with interaction}.
In \bbooktitle{Stochastic Analysis ({K}atata/{K}yoto, 1982)}.
\bseries{North-Holland Mathematical Library}
\bvolume{32}
\bpages{469--488}.
\bpublisher{North-Holland}, \baddress{Amsterdam}.
\bid{mr={0780770}}
\end{bincollection}
\endbibitem

\bibitem{yongzhou99}
\begin{bbook}[mr]
\bauthor{\bsnm{Yong},~\bfnm{Jiongmin}\binits{J.}} \AND
  \bauthor{\bsnm{Zhou},~\bfnm{Xun~Yu}\binits{X.~Y.}}
(\byear{1999}).
\btitle{Stochastic Controls: Hamiltonian systems and HJB equations}.
\bseries{Applications of Mathematics (New York)}
\bvolume{43}.
\bpublisher{Springer}, \baddress{New York}.
\bid{mr={1696772}}
\end{bbook}
\endbibitem

\end{thebibliography}
\end{document}